\documentclass[12pt]{amsart}
\usepackage{amssymb, hyperref,xcolor, comment, mathrsfs, fullpage}

\newtheorem{theorem}{Theorem}[section]
\newtheorem{lemma}[theorem]{Lemma}
\newtheorem{proposition}[theorem]{Proposition}

\newtheorem{definition}[theorem]{Definition}

\theoremstyle{remark}

\numberwithin{equation}{section}

\newcommand{\mo}{{-1}}

\newcommand{\bbZ}{\ensuremath{\mathbb{Z}}}

\newcommand{\id}{\ensuremath{\mathrm{id}}}

\newcommand{\calG}{\ensuremath{\mathcal{G}}}
\newcommand{\calM}{\ensuremath{\mathcal{M}}}
\newcommand{\calN}{\ensuremath{\mathcal{N}}}
\newcommand{\calO}{\ensuremath{\mathcal{O}}}
\newcommand{\frakh}{\ensuremath{\mathfrak{h}}}
\newcommand{\scrN}{\ensuremath{\mathscr{N}}}

\begin{document}

\title{Spectrum of twists of Cayley and Cayley sum graphs}
 
\author{Arindam Biswas}
\address{Department of Mathematics, Technion - Israel Institute of Technology, Haifa 32000, Israel}
\curraddr{}
\email{biswas@campus.technion.ac.il}
\thanks{}

\author{Jyoti Prakash Saha}
\address{Department of Mathematics, Indian Institute of Science Education and Research Bhopal, Bhopal Bypass Road, Bhauri, Bhopal 462066, Madhya Pradesh,
India}
\curraddr{}
\email{jpsaha@iiserb.ac.in}
\thanks{}

\subjclass[2010]{05C25, 05C50}

\keywords{Expander graphs, Cheeger inequality, Twists by automorphisms, Spectra of twists of Cayley graphs, Spectra of twists of Cayley sum graphs}

\begin{abstract}
Let $G$ be a finite group with $|G|\geq 4$ and $S$ be a subset of $G$. Given an automorphism $\sigma$ of $G$, the twisted Cayley graph $C(G, S)^\sigma$ (resp. the twisted Cayley sum graph $C_\Sigma(G, S)^\sigma$) is defined as the graph having $G$ as its set of vertices and the adjacent vertices of a vertex $g\in G$ are of the form $\sigma(gs)$ (resp. $\sigma(g^{-1} s)$) for some $s\in S$. If the twisted Cayley graph $C(G, S)^\sigma$ is undirected and connected, then we prove that the nontrivial spectrum of its normalised adjacency operator is bounded away from $-1$ and this bound depends only on its degree, the order of $\sigma$ and the vertex Cheeger constant of $C(G, S)^\sigma$. Moreover, if the twisted Cayley sum graph $C_\Sigma(G, S)^\sigma$ is undirected and connected, then we prove that the nontrivial spectrum of its normalised adjacency operator is bounded away from $-1$ and this bound depends only on its degree and the vertex Cheeger constant of $C_\Sigma(G, S)^\sigma$. We also study these twisted graphs with respect to anti-automorphisms, and obtain similar results. Further, we prove an analogous result for the Schreier graphs satisfying certain conditions. 
\end{abstract}

\maketitle


\section{Introduction}
\subsection{Motivation}
The study of the spectrum of graphs is an important theme in the theory of expanders. It was remarked by Breuillard--Green--Guralnick--Tao that the eigenvalues of the normalised Laplacian operator of non-bipartite, finite Cayley graphs are bounded away from $2$ (see \cite[Appendix E]{BGGTExpansionSimpleLie}). Recently, the first author established an explicit upper bound. 

Given a subset $S$ of a finite group $G$ with $|S| = d\geq 1$, the associated Cayley graph $C(G, S)$ has $G$ as its set of vertices and for $x, y\in G$, there is an edge from $x$ to $y$ if $y = x s$ for some $s\in S$. This graph is undirected if and only if $S$ is symmetric. In \cite[Theorem 1.4]{BiswasCheegerCayley} (cf. \cite[Theorem 2.11]{CheegerCayleySum}), it is established that if the Cayley graph $C(G, S)$ is undirected and connected, then the nontrivial spectrum of its adjacency operator lies in the interval 
$$\left( -1 + \frac{h^4}{2^9d^8} 
,
1 - \frac{h^2}{2d^2}
\right]$$
where $h$ denotes the vertex Cheeger constant of $C(G, S)$. 

It turns out that a similar result holds for the Cayley sum graphs, which are classical combinatorial objects, e.g., see \cite{HandBookCombi}. The Cayley sum graph $C_\Sigma(G, S)$ has $G$ as its set of vertices, and for $x, y\in S$, there is an edge from $x$ to $y$ if $y = x^\mo s$ for some $s\in S$. This graph is undirected if and only if $S$ is closed under conjugation (see \cite[Lemma 2.6]{CheegerCayleySum}). In \cite[Theorem 1.3]{CheegerCayleySum}, it is established that if the Cayley sum graph $C_\Sigma(G, S)$ is undirected and connected, then the nontrivial spectrum of its adjacency operator lies in the interval 
$$\left( -1 + \frac{h_\Sigma^4}{2^9d^8} 
,
1 - \frac{h_\Sigma^2}{2d^2}
\right]$$
where $h_\Sigma$ denotes the vertex Cheeger constant of $C_\Sigma(G, S)$. 

\subsection{Results obtained}
Given a group automorphism $\sigma$ of $G$, one can consider variants of the Cayley graph and the Cayley sum graph, viz., the twisted Cayley graph $C(G, S)^\sigma$ and the twisted Cayley sum graph $C_\Sigma (G, S)^\sigma$. The twisted Cayley graph $C(G, S)^\sigma$ has $G$ as its set of vertices, and for $x, y\in G$, there is an edge from $x$ to $y$ if $y = \sigma(xs)$ for some $s\in S$. The twisted Cayley sum graph $C_\Sigma (G, S)^\sigma$ has $G$ as its set of vertices, and for $x, y\in G$, there is an edge from $x$ to $y$ if $y = \sigma(x^\mo s)$ for some $s\in S$. 

Note that the twisted Cayley graphs, and the twisted Cayley sum graphs provide examples of graphs which are neither Cayley graphs, nor Cayley sum graphs (if we focus on the twists by automorphisms of order two only). In fact, there are twisted Cayley graphs which are isomorphic to no Cayley graphs, no Cayley sum graphs, no twisted Cayley sum graphs, and there are twisted Cayley sum graphs which are isomorphic to no Cayley graphs, no Cayley sum graphs, no twisted Cayley graphs, as the following examples illustrate. 

Let $p$ be an odd prime. Let $D_{2p}$ denote the dihedral group of order $2p$. Let $s$ denote an element of $D_{2p}$ of order two. Let $\sigma$ be the involution of $D_{2p}$ which fixes $s$ and sends any element of order $p$ to its inverse. Then the twisted Cayley graph $C(D_{2p}, \{s\})^\sigma$ is isomorphic to no Cayley graph, no Cayley sum graph, no twisted Cayley sum graph on any group of order $2p$. 

Let $\tau$ denote the involution of $\bbZ/2p\bbZ$ of order two. Then, for any symmetric subset $S$ of $\bbZ/2p\bbZ$ containing the identity element and having size $\leq p-1$, the twisted Cayley sum graph $C_\Sigma(\bbZ/2p\bbZ, S)^\tau$ is isomorphic to no Cayley graph, no Cayley sum graph, no twisted Cayley graph on any group of order $2p$. 

In this article, one of our aim is to show that the spectrum of the twisted Cayley graph $C(G, S)^\sigma$ and twisted Cayley sum graph $C_\Sigma(G,S)^\sigma$ are bounded away from $-1$.

\begin{theorem}\label{Thm:Bdd}
Let $S$ be a subset of a finite group $G$ with $|S|= d$. Suppose $\sigma$ is an automorphism of $G$. 
\begin{enumerate}
\item 
Suppose $\sigma^{2}$ is the trivial automorphism of $G$. 
If the twisted Cayley graph $C(G, S)^\sigma$ is connected and undirected and $|G| \geq 4$, then the nontrivial spectrum of its normalised adjacency operator lies in the interval 
$$\left( -1 + \frac{h_\sigma^4}{2^{12}d^8} 
,
1 - \frac{h_\sigma^2}{2d^2}
\right]$$
where $h_\sigma$ denotes the vertex Cheeger constant of $C(G, S)^\sigma$. 

\item 
Suppose $\sigma^{2k}$ is the trivial automorphism of $G$, where $k\geq 1$ is an odd integer. 
If the twisted Cayley graph $C(G, S)^\sigma$ is connected and undirected and $|G| \geq 4$, then the nontrivial spectrum of its normalised adjacency operator lies in the interval 
$$\left( 
\left(
-1 + \frac {1}{2^{12} d^{8k}}
\left(
\frac 12 
\left(
1 - 
\left(
1 - \frac{h_\sigma^2}{2d^2}
\right)^k
\right)
\right)
^4
\right)^{1/k}
,
1 - \frac{h_\sigma^2}{2d^2}
\right]$$
where $h_\sigma$ denotes the vertex Cheeger constant of $C(G, S)^\sigma$. 

\item If the twisted Cayley sum graph $C_\Sigma (G, S)^\sigma$ is connected and undirected and $|G| \geq 4$, then the nontrivial spectrum of its normalised adjacency operator lies in the interval 
$$\left( -1 + \frac{h_{\Sigma, \sigma}^4}{2^{12}d^8}
,
1 - \frac{h_{\Sigma, \sigma}^2}{2d^2}
\right]$$
where $h_{\Sigma, \sigma}$ denotes the vertex Cheeger constant of $C_\Sigma (G, S)^\sigma$. 
\end{enumerate}
\end{theorem}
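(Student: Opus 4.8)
The plan is to treat the two ends of the spectrum separately. The upper bound $1-h^{2}/(2d^{2})$ occurs in all three parts and involves no twisting, so I would obtain it from the classical discrete (vertex) Cheeger inequality: in each case the graph is connected and $d$-regular, hence the second largest eigenvalue of its normalised adjacency operator is at most $1-h^{2}/(2d^{2})$, where $h$ is the relevant vertex Cheeger constant. The substance is the lower bound, i.e. bounding the nontrivial spectrum away from $-1$. I would prove (1) and (3) directly, adapting the variational arguments behind \cite[Theorem~1.4]{BiswasCheegerCayley} and \cite[Theorem~2.11]{CheegerCayleySum} to the twisted setting, and then deduce (2) from (1) by a power trick (which, incidentally, is why the interval in (2) involves a $k$-th root).

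For the reduction of (2) to (1), the key observation is that a power of the adjacency operator of $C(G,S)^{\sigma}$ is again the adjacency operator of a twisted Cayley (multi)graph. Unwinding the definition, a walk of $j$ steps sends $x$ to $\sigma^{j}\!\big(x\,s_{1}\sigma^{-1}(s_{2})\cdots\sigma^{-(j-1)}(s_{j})\big)$, so, writing $A^{\sigma}$ for the adjacency operator and $M^{\sigma}=\tfrac1d A^{\sigma}$, the operator $(A^{\sigma})^{k}$ is the adjacency operator of $C(G,T)^{\sigma^{k}}$, where $T$ is the multiset $S\cdot\sigma^{-1}(S)\cdots\sigma^{-(k-1)}(S)$ of size $d^{k}$. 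Since $k$ is odd and $\sigma^{2k}$ is trivial, $(\sigma^{k})^{2}$ is trivial; moreover $C(G,T)^{\sigma^{k}}$ is undirected ($A^{\sigma}$ is symmetric) and connected (a connected graph has closed walks of all large lengths and, $k$ being odd, $2$ is invertible modulo $k$, so between any two vertices some walk has length a multiple of $k$), so part (1) applies to it, with the proof of (1) used for a connection multiset in place of a set. This gives that the nontrivial eigenvalues of $(M^{\sigma})^{k}$ lie in $\big({-1}+(h')^{4}/(2^{12}d^{8k}),\,1-(h')^{2}/(2d^{2k})\big]$, where $h'$ is the vertex Cheeger constant of $C(G,T)^{\sigma^{k}}$. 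As $t\mapsto t^{k}$ is increasing on $[-1,1]$, the eigenvalues of $(M^{\sigma})^{k}$ are exactly the $k$-th powers of those of $M^{\sigma}$, the least one is $\lambda_{\min}(M^{\sigma})^{k}$, and connectedness of $C(G,S)^{\sigma}$ makes it nontrivial; hence $\lambda_{\min}(M^{\sigma})>\big({-1}+(h')^{4}/(2^{12}d^{8k})\big)^{1/k}$. Finally I pass from $h'$ to $h_{\sigma}$: the easy direction of Cheeger's inequality applied to $C(G,T)^{\sigma^{k}}$ gives $h'\geq\tfrac12\big(1-\lambda_{2}((M^{\sigma})^{k})\big)$, and $\lambda_{2}((M^{\sigma})^{k})=\lambda_{2}(M^{\sigma})^{k}\leq\big(1-h_{\sigma}^{2}/(2d^{2})\big)^{k}$ by the upper bound above; substituting gives exactly the interval claimed in (2).

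For (1) and (3) I would run the near-bipartiteness argument of \cite{BiswasCheegerCayley,CheegerCayleySum} in the twisted setting. If $1+\lambda_{\min}(M^{\sigma})=\varepsilon$ with unit eigenfunction $f$, then $\tfrac1{2d}\sum_{x}\sum_{s\in S}\big(f(x)+f(\sigma(xs))\big)^{2}=\varepsilon$; consequently $|f|$ is close in $L^{2}$ to a constant, the error being controlled by $\varepsilon$ together with the spectral gap and hence, via the upper bound, by $\varepsilon$, $d$ and $h_{\sigma}$, and the superlevel and sublevel sets of $f$ form a near-bipartition of $C(G,S)^{\sigma}$ with few monochromatic edges. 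One then confronts this near-bipartition with the vertex Cheeger constant. The point that replaces vertex-transitivity of ordinary Cayley graphs is that the twisted translation operators $h\mapsto h(\sigma(\,\cdot\,s))$ compose to genuine translations $h\mapsto h(\,\cdot\,ss')$ (here $\sigma^{2}$ is trivial), so $C(G,S)^{\sigma}$ carries enough homogeneity — when connected it is the Schreier graph of a transitive group action — to force $\varepsilon\geq h_{\sigma}^{4}/(2^{12}d^{8})$, the extra factor $2^{3}$ over the untwisted constant $2^{9}$ being the cost of the twist. Part (3) is handled the same way, the Cayley-sum bookkeeping (symmetry, conjugation-closedness) behaving as in \cite{CheegerCayleySum}; one may also exploit the relabelling $x\mapsto x^{-1}$, which carries $C_{\Sigma}(G,S)^{\sigma}$ to a twisted Cayley graph with respect to an anti-automorphism, to transfer structure between the two families.

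The main obstacle I anticipate is the argument of the previous paragraph. The twisted graphs are genuinely less symmetric than Cayley graphs — right translation by $a$ only maps $C(G,S)^{\sigma}$ to $C(G,a^{-1}S\sigma(a))^{\sigma}$, and left translation fails outright — so the homogeneity that the untwisted proofs rely on must be recovered in the weaker, twisted form above, and one has to treat with care the situation where the subgroup generated by $SS^{-1}$ is proper, in which the walk structure decouples into two pieces; this is presumably where the loss from $2^{9}$ to $2^{12}$ originates. One must also route all the absolute-value and near-bipartition estimates so that the auxiliary Cheeger constants ($h'$ above, and the analogous quantities arising in the proof of (1) and (3)) are controlled without ever invoking $|\lambda_{\min}|$ itself, since that is precisely the quantity being bounded.
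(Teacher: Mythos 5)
Your upper bound and your deduction of part (2) from part (1) do match the paper's route: Theorem \ref{thmPrincipal} performs exactly your odd-power reduction, applying the order-two case to the $k$-th power graph and converting its vertex Cheeger constant via Lemma \ref{Lemma:VertexEdgeCons} and Proposition \ref{Prop:chin} into $\frac 12\bigl(1-(1-h_\sigma^2/(2d^2))^k\bigr)$, which reproduces the stated interval. The genuine gap is in parts (1) and (3), i.e.\ in the actual lower bound $-1+h^4/(2^{12}d^8)$: your proposal does not prove it. You sketch an eigenfunction/near-bipartition argument and then assert that the homogeneity of the twisted graph ``forces'' $\varepsilon\geq h_\sigma^4/(2^{12}d^8)$, while yourself flagging precisely this step as the unresolved main obstacle and only guessing (``presumably'') where the constant $2^{12}$ comes from. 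That step is the whole content of the theorem: knowing that $|f|$ is nearly constant and that the sign sets of $f$ give a near-bipartition with few monochromatic edges does not by itself exclude an eigenvalue near $-1$; one must use the group structure to rule out bipartite-like configurations quantitatively, and no mechanism for that is supplied.

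The mechanism the paper uses, and which is absent from your proposal, is the argument of Proposition \ref{Prop:AExists} and Theorem \ref{thmPrincipalk1} (in the spirit of BGGT, Appendix E): assuming an eigenvalue in $(-1,-1+\zeta]$, Cheeger--Buser applied to the graph of $T^2$ produces a set $A$ with $|V|/(2+\beta+d\beta/\varepsilon)\leq |A|\leq |V|/2$ and $|\calN(\calN(A))\setminus A|\leq \beta|A|$; one then defines $H=\{\tau\in G:\ |A\cap\tau(A)|\geq r|A|\}$, uses the dichotomy of Proposition \ref{Prop:AExists} (valid because $\calN(\calN(\tau A))\subseteq\tau\,\calN(\calN(A))$, which is where $\sigma^2=\mathrm{id}$ enters for the twisted Cayley graph, and where the undirectedness relation $\sigma^2(g)\sigma(s)g^{-1}\in S$ enters for the twisted Cayley sum graph) to show $H$ is a subgroup, a double count of $\sum_{g}|A\cap gA|$ over the transitive action to show $H$ is proper and in fact of index two, and finally the intertwining relations $\theta_i(g\cdot v)=\psi_{i,v}(g)\cdot\theta_i(v)$ to show that the two $H$-orbits form a bipartition, contradicting non-bipartiteness. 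This subgroup-of-approximate-symmetries step, not a refinement of the untwisted eigenfunction estimates, is what produces the explicit constant $2^{12}$; it is also what makes part (3) work with no hypothesis on the order of $\sigma$ (the equivariance of $\scrN_\Sigma^2$ comes from Lemma \ref{Lemma:UndirectedsigmaCayleySum}, not from $\sigma^{2k}=\mathrm{id}$), a point your plan for (3) does not address. Without this (or an equivalent) argument, parts (1) and (3) are unproven, and consequently so is your reduction for part (2), which presupposes (1).
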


More generally, we study the spectrum of undirected, connected, non-bipartite graphs carrying a suitable action of a group and establish Theorem \ref{thmPrincipal}. Using Theorem \ref{thmPrincipal}, we prove Theorem \ref{Thm:Bdd}. The proof of Theorem \ref{thmPrincipal} is motivated by the strategy used in \cite[Appendix E]{BGGTExpansionSimpleLie}.

In Section \ref{Sec:TwistsAnti}, we consider twisted Cayley graphs and twisted Cayley sum graphs with respect to an anti-automorphism of the underlying group. We establish that if these graphs are undirected and connected, then the nontrivial spectrum of their normalised adjacency operators are bounded away from $-1$ (see Theorem \ref{Thm:BddAnti}). 

We consider Schreier graphs in Section \ref{Sec:Schreier}. We prove under certain hypothesis that the nontrivial eigenvalues of the normalised adjacency operator of a given Schreier graph is bounded away form $-1$ (see Theorem \ref{Thm:Schreier}). 

\subsection{Acknowledgements}
We wish to thank Emmanuel Breuillard for a number of helpful discussions during the opening colloquium of the M\"unster Mathematics Cluster, 
and the MFO for their hospitality, where a part of this work was initiated. The first author is supported by the ISF Grant no. 662/15 at the Technion. The second author would like to acknowledge the Initiation Grant from the Indian Institute of Science Education and Research Bhopal and the INSPIRE Faculty Award from the Department of Science and Technology, Government of India.

\section{Preliminaries}
In the following, all the graphs considered are undirected. However, these graphs may contain multiple edges and even multiple loops at certain vertices. Given a finite $d$-regular multi-graph $\mathbb{G} = (V,E)$ having $V$ as its set of vertices and $E$ as its multiset of edges, we have the normalised adjacency operator $T$ of size $|V|\times |V|$. The normalised Laplacian operator of $\mathbb{G}$ is defined by 
$$L:= I_{|V|} - T,$$
where $I_{|V|}$ denotes the identity matrix of size $|V|\times |V|$. Let $n$ denote the number of elements of $V$. Denote the eigenvalues of $T$ and the eigenvalues of $L$ by $\lbrace t_{i} : i= 1, \cdots, n\rbrace $ and $\lbrace \lambda_{i} : i= 1, \cdots, n\rbrace $ respectively such that $\lambda_{i} = 1 - t_{i}$ and 
$$
 0 = \lambda_{1} \leqslant \lambda_{2} \leqslant \cdots \leqslant \lambda_{n-1}\leqslant \lambda_{n}  \leqslant 2.
 $$

Let $\mathbb{G} = (V,E)$ be a multi-graph. For a subset $V_{1}\subseteq V$, its neighbourhood in $\mathbb G$ is denoted by $N(V_{1})$ and is defined as
$$N(V_{1}) := \lbrace v\in V : (v, v_{1})\in E \text{ for some } v_{1}\in V_{1}\rbrace.$$
The boundary of $V_{1}$ is defined as $ \partial(V_{1}) := N(V_{1})\backslash V_{1}$.

\begin{definition}[Vertex Cheeger constant]
The vertex Cheeger constant $h(\mathbb{G})$ of a multi-graph $\mathbb{G} = (V,E)$ is defined as 
$$h(\mathbb{G}) := \inf \left\lbrace \frac{|\partial(V_{1})|}{|V_{1}|} : \emptyset \neq V_{1}\subseteq V, |V_{1}|\leqslant \frac{|V|}{2} \right\rbrace.$$
\end{definition}

We recall the notion of expander graphs as stated in \cite{AlonEigenvalueExpand}.

\begin{definition}[$(n,d,\varepsilon)$-expander]
\label{vexp}
Let $\varepsilon>0$. An $(n,d,\varepsilon)$-expander is a graph $(V,E)$ on $n$ vertices, having maximal degree $d$, such that for every set $\emptyset \neq V_{1}\subseteq V$ satisfying $|V_{1}|\leqslant \frac{|V|}{2}$, the inequality $|\partial(V_{1})|\geqslant \varepsilon|V_{1}|$ holds (equivalently, $h((V, E))\geqslant \varepsilon).$
\end{definition}

The degree of a vertex of a multi-graph is the number of half-edges adjacent to it (in the absence of loops). The presence of a loop at a vertex increases its degree by one. A multi-graph is called $r$-regular if each vertex has degree $r$. Apart from the notion of vertex expansion as in Definition \ref{vexp}, there is the notion of edge expansion.

\begin{definition}[Edge expansion]
Let $\mathbb{G} = (V,E)$ be a $d$-regular multi-graph with vertex set $V$ and edge multiset $E$. For any nonempty subset $V_1$ of $V$, the edge boundary $E(V_{1},V\backslash V_{1})$ of $V_{1}$ is defined as the multiset 
$$E(V_{1},V\backslash V_{1}) := \lbrace (v_{1},v_2)\in E: v_{1}\in V, v_2\in V\backslash V_{1} \rbrace ,$$
and the edge expansion ratio $\phi(V_{1})$ of $V_1$ is defined as 
$$\phi(V_{1}) := \frac{|E(V_{1},V\backslash V_{1})|}{d|V_{1}|}.$$
\end{definition}

\begin{definition}[Edge Cheeger constant]
The edge Cheeger constant $\mathfrak{h}(\mathbb{G})$ of a multi-graph $\mathbb G = (V, E)$ is defined by 
$$\mathfrak{h}(\mathbb{G}):= \inf_{\emptyset \neq V_{1}\subseteq V, |V_{1}|\leqslant |V|/2} \phi(V_{1}).$$
\end{definition}

The two Cheeger constants are related by the following lemma. 

\begin{lemma}
\label{Lemma:VertexEdgeCons}
For $d$-regular multi-graph $\mathbb{G} = (V,E)$, the inequalities 
$$ \frac{h(\mathbb{G})}{d} \leqslant \mathfrak{h}(\mathbb{G}) \leqslant h(\mathbb{G})$$
hold. 
\end{lemma}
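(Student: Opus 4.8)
The plan is to establish, for each nonempty $V_1 \subseteq V$ with $|V_1| \leq |V|/2$, the two-sided estimate
$$\frac{|\partial(V_1)|}{d|V_1|} \leq \phi(V_1) \leq \frac{|\partial(V_1)|}{|V_1|},$$
and then pass to the infimum over all such $V_1$. Combined with $|\partial(V_1)|/|V_1| \geq h(\mathbb{G})$, the first inequality gives $\phi(V_1) \geq h(\mathbb{G})/d$, hence $\mathfrak{h}(\mathbb{G}) \geq h(\mathbb{G})/d$; the second, after taking infima on both sides, gives $\mathfrak{h}(\mathbb{G}) \leq h(\mathbb{G})$.

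To prove the two-sided estimate I would first record the basic structure of the edge boundary $E(V_1, V\setminus V_1)$: it consists precisely of the edges having exactly one endpoint in $V_1$, so that loops contribute nothing, and each of its edges has a well-defined endpoint $w$ lying in $V\setminus V_1$. Since $w$ is then adjacent to a vertex of $V_1$, we have $w \in N(V_1)\setminus V_1 = \partial(V_1)$. Grouping the edges of $E(V_1, V\setminus V_1)$ by this outside endpoint $w$ partitions the edge boundary into classes indexed by a subset of $\partial(V_1)$.

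For the lower bound on $\phi(V_1)$, note that by the definition of $\partial(V_1)$ every $w \in \partial(V_1)$ is adjacent to some vertex of $V_1$ and hence is the outside endpoint of at least one edge of $E(V_1, V\setminus V_1)$; as distinct vertices $w$ index disjoint classes, $|E(V_1, V\setminus V_1)| \geq |\partial(V_1)|$, which is the claimed lower bound after dividing by $d|V_1|$. For the upper bound, observe that each $w \in \partial(V_1)$ has degree $d$ and therefore is the endpoint of at most $d$ edges of the edge boundary; summing over $w \in \partial(V_1)$ gives $|E(V_1, V\setminus V_1)| \leq d|\partial(V_1)|$, which is the claimed upper bound after dividing by $d|V_1|$.

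There is no genuine obstacle here; the only point requiring care is the bookkeeping for multi-graphs with multiple edges and loops — namely verifying that loops never lie in the edge boundary, that the "outside endpoint" map is well-defined on $E(V_1, V\setminus V_1)$, and that the regularity bound $\deg(w) = d$ still caps the number of boundary edges incident to $w$ even in the presence of loops or parallel edges at $w$. Once these conventions are fixed, both inequalities follow immediately, and passing to the infimum completes the proof.
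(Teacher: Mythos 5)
Your proposal is correct and is essentially the paper's argument: your ``outside endpoint'' map on $E(V_1,V\setminus V_1)$ is exactly the map $\psi:(v_1,v_2)\mapsto v_2$ used in the paper, with surjectivity giving $|E(V_1,V\setminus V_1)|\geq |\partial(V_1)|$ and the fibre bound $\leq d$ giving $|E(V_1,V\setminus V_1)|\leq d\,|\partial(V_1)|$, after which one passes to the infimum just as you do. Your extra care about loops and parallel edges is a harmless elaboration of the same proof.
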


\begin{proof}
For $\emptyset \neq V_{1}\subseteq V$, consider the map $$\psi:E(V_{1},V\backslash V_{1}) \rightarrow \partial(V_{1}) \text{ given by }(v_{1},v_2)\mapsto v_{2}.$$ 
This map is surjective, and hence the first inequality follows. For the second inequality, note that the fibre of any point of $\partial(V_1)$ under the map $\psi$ contains at most $d$ elements. 
\end{proof}

The following Proposition relates the edge Cheeger constant of a multi-graph with the second smallest eigenvalue of its Laplacian operator. It is the version for graphs of the corresponding inequalities for the Laplace--Beltrami operator on compact Riemannian manifolds. It was first established by Cheeger \cite{CheegerLowerBddSmallest} (the lower bound) and by Buser \cite{BuserNoteIsoperimetric} (the upper bound). The discrete version was proved by Alon and Millman \cite{AlonMilmanIsoperiIneqSupConcen} (Proposition \ref{Prop:chin}).

\begin{proposition}[Discrete Cheeger--Buser inequality]
\label{Prop:chin}
Let $\mathbb{G} = (V,E)$ be a finite $d$-regular multi-graph. 
Let $\mathfrak{h}(\mathbb{G})$ denote its edge Cheeger constant and $\lambda_{2}$ denote the second smallest eigenvalue of its normalised Laplacian operator. Then 
$$ \frac{\mathfrak{h}(\mathbb{G})^{2}}{2} \leqslant \lambda_{2} \leqslant 2\mathfrak{h}(\mathbb{G}).$$
\end{proposition}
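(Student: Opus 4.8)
The plan is to express $\lambda_2$ through the Rayleigh quotient of the normalised Laplacian and then prove the two inequalities separately, the upper bound by an explicit test function and the lower bound (the genuinely hard direction) by a truncation-and-rounding argument. First I would record the quadratic form: for $f\colon V\to\mathbb R$,
$$\langle Lf,f\rangle=\frac1d\sum_{(u,v)\in E}\bigl(f(u)-f(v)\bigr)^2,\qquad \langle f,f\rangle=\sum_{v\in V}f(v)^2,$$
where the edge sum runs over the multiset $E$; note that loops contribute $0$ to the left-hand sum, so they are harmless throughout. Since $L$ is symmetric positive semidefinite with $\lambda_1=0$ attained by the constant function $\mathbf 1$, the Courant--Fischer theorem gives
$$\lambda_2=\min_{\substack{f\perp\mathbf 1\\ f\neq 0}}\frac{\langle Lf,f\rangle}{\langle f,f\rangle}.$$

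For the upper bound $\lambda_2\le 2\mathfrak{h}(\mathbb G)$, I would choose $V_1$ attaining the edge Cheeger constant (the infimum is a minimum since $\mathbb G$ is finite) and use the centred indicator $f=\mathbf 1_{V_1}-\tfrac{|V_1|}{|V|}\mathbf 1$, which is orthogonal to $\mathbf 1$. Because the constant cancels in every difference $f(u)-f(v)$, the numerator equals $\tfrac1d|E(V_1,V\setminus V_1)|=|V_1|\phi(V_1)$, while a short computation gives $\langle f,f\rangle=|V_1|\,(|V|-|V_1|)/|V|\ge|V_1|/2$ using $|V_1|\le|V|/2$. Feeding these into the Rayleigh quotient yields $\lambda_2\le 2\phi(V_1)=2\mathfrak{h}(\mathbb G)$.

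The lower bound $\tfrac12\mathfrak{h}(\mathbb G)^2\le\lambda_2$ is the main obstacle and requires two ideas. Let $g$ be a $\lambda_2$-eigenfunction; since $g\perp\mathbf 1$ it changes sign, and after replacing $g$ by $-g$ if necessary I may assume the set $V_+=\{v:g(v)>0\}$ satisfies $|V_+|\le|V|/2$. Set $f=\max(g,0)$, supported on $V_+$. The first key step is Rayleigh monotonicity under this truncation: for each $v\in V_+$ one has $(Lf)(v)\le(Lg)(v)=\lambda_2 g(v)=\lambda_2 f(v)$, because $f\ge g$ pointwise while $f(v)=g(v)$; multiplying by $f(v)\ge0$ and summing over all vertices gives $\langle Lf,f\rangle\le\lambda_2\langle f,f\rangle$, so $R(f):=\langle Lf,f\rangle/\langle f,f\rangle\le\lambda_2$.

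The second, and most delicate, step is to round the nonnegative function $f$ into a cut via the coarea formula applied to the threshold sets $S_t=\{v:f(v)>t\}$. Integrating the cut size against $2t\,dt$ gives $\int_0^\infty|E(S_t,V\setminus S_t)|\,2t\,dt=\sum_{(u,v)\in E}\bigl|f(u)^2-f(v)^2\bigr|$, while $\int_0^\infty|S_t|\,2t\,dt=\langle f,f\rangle$. Since each nonempty $S_t$ lies in $V_+$ and thus has at most $|V|/2$ vertices, $|E(S_t,V\setminus S_t)|\ge d\,\mathfrak{h}(\mathbb G)\,|S_t|$, whence $\sum_{(u,v)\in E}|f(u)^2-f(v)^2|\ge d\,\mathfrak{h}(\mathbb G)\,\langle f,f\rangle$. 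Factoring $|f(u)^2-f(v)^2|=|f(u)-f(v)|\,|f(u)+f(v)|$ and applying Cauchy--Schwarz, together with $\sum_{(u,v)}(f(u)-f(v))^2=d\langle Lf,f\rangle$ and $\sum_{(u,v)}(f(u)+f(v))^2\le 2d\langle f,f\rangle$, converts this into $\mathfrak{h}(\mathbb G)^2\langle f,f\rangle\le 2\langle Lf,f\rangle$, i.e. $\tfrac12\mathfrak{h}(\mathbb G)^2\le R(f)\le\lambda_2$. The points demanding care are the sign/size normalisation of $V_+$, the coarea bookkeeping, and checking that the loop convention does not spoil the inequality $\sum_{(u,v)}(f(u)+f(v))^2\le 2d\langle f,f\rangle$; restricting all edge sums to non-loop edges (which suffices, as loops are never cut edges and contribute $0$ to the difference sum) keeps this bound valid.
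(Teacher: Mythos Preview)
Your argument is correct and is exactly the standard proof of the discrete Cheeger--Buser inequality: the easy direction via a centred indicator test function, and the hard direction via truncating a $\lambda_2$-eigenfunction to its smaller support, applying the coarea/level-set inequality, and closing with Cauchy--Schwarz. You have also correctly flagged the only subtle points (sign/size normalisation of $V_+$, nonvanishing of $f$, and the loop convention) and handled them.

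There is nothing to compare, however, because the paper does not actually prove this proposition: its entire ``proof'' consists of citations to Lubotzky's book \cite[Propositions~4.2.4, 4.2.5]{LubotzkyDiscreteGroups} and to Chung \cite{ChungLaplacianOfGraph}. Your write-up is essentially the argument one finds in those references, so you have supplied what the paper outsources. If you want to match the paper literally, you could simply cite those sources; if you want a self-contained account, what you have written is fine.
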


\begin{proof}
See \cite[Propositions 4.2.4, 4.2.5]{LubotzkyDiscreteGroups} or 
\cite{ChungLaplacianOfGraph}. 
\end{proof}

\section{Generalities}

Let $(V, E)$ be a finite graph (which may contain multiple edges, and even multiple loops at certain vertices) of degree $d$. 
The neighbourhood of a subset $V'$ of $V$ in $(V, E)$ is denoted by $\calN(V')$. Assume that there exist an integer $d\geq 1$ and permutations $\theta_1, \cdots, \theta_d: V\to V$ such that the vertices $v, \theta_i(v)$ are adjacent in $(V, E)$ for any $v\in V$ and $1\leq i \leq d$, and $\calN(v)$ is equal to $\cup_{i=1}^d \{\theta_i(v)\}$ for any $v\in V$. For a subset $V'$ of $V$, denote the subset $\theta_i(V')$ of $\calN(V')$ by $\calN^i(V')$. 

\begin{proposition}
\label{Prop:AExists}
Assume that the graph $(V, E)$ is undirected. Suppose $|V|\geq 4$, the graph $(V, E)$ is an $\varepsilon$-vertex expander for some $\varepsilon>0$ and its normalised adjacency operator has an eigenvalue in the interval $(-1, -1+\zeta]$ for some $\zeta$ satisfying $0<\zeta \leq \frac{\varepsilon^2}{4d^4}$. Then for some subset $A$ of $V$, the inequalities 
$$\left(\frac 1{2 + \beta + \frac{d\beta}{\varepsilon}}\right) |V| \leq |A| \leq \frac 12 |V|$$
hold with $\beta = d^2 \sqrt{2\zeta (2-\zeta)}$. Let $\tau$ be an automorphism of the set $V$ such that 
$$
\calN (\calN(\tau(A))) \subseteq \tau(\calN(\calN(A))).
$$
If 
$$\beta < \frac{\varepsilon^2}{2d(2d+1)},$$
then exactly one of the inequalities  
$$|A \cap \tau(A)| \leq \frac{d \beta}{\varepsilon^2} (\varepsilon + d + 2)|A|, 
\quad 
|A \cap \tau(A)| \geq \left( 1 - \frac {d \beta}{ \varepsilon^2}  ( \varepsilon + d + 2) \right) |A|$$
holds.
\end{proposition}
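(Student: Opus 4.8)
The plan is to extract from the near-$(-1)$ eigenvalue an ``almost bipartition'' $A$ of $V$ that is compatible with every $\theta_i$, to show that such an $A$ is essentially rigid, and then to read off from the hypothesis on $\tau$ that $\tau(A)$ must be essentially either $A$ or its complement; propagating the error estimates through this argument yields the stated dichotomy.

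\emph{Step 1 (producing and sizing $A$).} Let $f$ be an eigenfunction of the normalised adjacency operator $T$ for an eigenvalue $t\in(-1,-1+\zeta]$, so $I+T$ has $1+t\le\zeta$ as an eigenvalue and $(V,E)$ is almost bipartite. Running the Cheeger lower-bound argument on $1+t$ (sorting the level sets of $f$ and estimating Rayleigh quotients, i.e. the bipartite analogue of Proposition~\ref{Prop:chin}) produces a partition of $V$ with small bipartiteness ratio; taking for $A$ the side with $|A|\le\tfrac12|V|$, one gets, for every $i$, that $\calN^i(A)$ lies in $V\setminus A$ up to an error of size $\lesssim\tfrac{\beta}{d}|A|$, and symmetrically for $V\setminus A$, where $\beta=d^2\sqrt{2\zeta(2-\zeta)}$ (the hypothesis $\zeta\le\varepsilon^2/4d^4$ guarantees $\beta\le\varepsilon$, keeping all constants meaningful). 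For the size lower bound: $\calN(A)$ is essentially contained in $V\setminus A$, while $\varepsilon$-vertex expansion and $|A|\le\tfrac12|V|$ give $|\calN(A)\setminus A|\ge\varepsilon|A|$; and since each $\theta_i$ is a bijection of $V$, the identities $|V\setminus A|-|A|=|\calN^i(V\setminus A)\cap(V\setminus A)|-|\calN^i(A)\cap A|$ let one bound $|V\setminus A|-|A|$ by $(\beta+d\beta/\varepsilon)|A|$, which rearranges to $|A|\ge|V|/(2+\beta+d\beta/\varepsilon)$.

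\emph{Step 2 (rigidity, and transport along $\tau$).} Because $(V,E)$ is undirected, every $v$ has a neighbour $u$ with $v\in\calN(u)$, whence $A\subseteq\calN(\calN(A))$ unconditionally. Composing the one-step estimates of Step~1, $\calN^j(\calN^i(A))$ differs from $A$ by a controlled amount for each $i,j$, so $\calN(\calN(A))=A\sqcup E_A$ with $E_A$ disjoint from $A$ and $|E_A|$ bounded by a fixed power of $d$ times $\tfrac{\beta}{\varepsilon}|A|$. The same undirectedness gives $\tau(A)\subseteq\calN(\calN(\tau(A)))$, and the hypothesis $\calN(\calN(\tau(A)))\subseteq\tau(\calN(\calN(A)))=\tau(A)\sqcup\tau(E_A)$ then forces $\calN(\calN(\tau(A)))=\tau(A)\sqcup(\text{a subset of }\tau(E_A))$. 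Put $C:=A\cap\tau(A)$. Using $\calN(X\cap Y)\subseteq\calN(X)\cap\calN(Y)$ twice, $\calN(\calN(C))\subseteq(A\sqcup E_A)\cap(\tau(A)\sqcup\tau(E_A))\subseteq C\cup E_A\cup\tau(E_A)$, and $C\subseteq\calN(\calN(C))$ by undirectedness, so $\calN(\calN(C))=C\sqcup F$ with $|F|\le 2|E_A|$; an entirely parallel computation gives $\calN(\calN(D))=D\sqcup F'$ for $D:=A\setminus C$ with $|F'|$ of the same size. Thus $C$ and $D$ partition $A$ and both are almost invariant under the two-step walk $v\mapsto\calN^j(\calN^i(v))$.

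\emph{Step 3 (the dichotomy).} The two-step graph $(V,E^{(2)})$ is near-disconnected \emph{only} because of the almost bipartition $\{A,V\setminus A\}$; away from that single cut it inherits expansion from $(V,E)$. Making this quantitative — showing that for $W\subseteq A$ with $|W|\le\tfrac12|A|$ one has $|\calN(\calN(W))\setminus W|\ge c(\varepsilon,d)|W|$ with $c(\varepsilon,d)>0$ an explicit rational function of $\varepsilon,d$ obtained by routing the $\varepsilon$-expansion of $(V,E)$ through the almost-bipartite structure — and applying it to whichever of $C,D$ has size $\le\tfrac12|A|$, while using the bounds on $|F|,|F'|$ from Step~2, shows that that set has size at most $\tfrac1{c(\varepsilon,d)}$ times the small quantity of Step~2, which after tracking all constants is exactly $\tfrac{d\beta}{\varepsilon^2}(\varepsilon+d+2)|A|$. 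This is one of the two displayed inequalities; and since $\beta<\tfrac{\varepsilon^2}{2d(2d+1)}$ forces $\tfrac{d\beta}{\varepsilon^2}(\varepsilon+d+2)<\tfrac12$ (using $\varepsilon\le h(V,E)\le\tfrac32$, valid as $|V|\ge4$, together with $d\ge3$, the relevant range), at most one of them can hold.

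\emph{Main obstacle.} The heart of the proof is Step~3: the two-step graph really is near-disconnected, so the dichotomy cannot come from a naïve appeal to vertex expansion of $(V,E^{(2)})$. One has to show that $\varepsilon$-vertex expansion of $(V,E)$ forces $(V,E^{(2)})$ to expand away from the distinguished bipartite cut, and then carry the error terms from Steps~1--2 through this without losing the powers of $d$ and $\varepsilon$ occurring in $\tfrac{d\beta}{\varepsilon^2}(\varepsilon+d+2)$. Obtaining Step~1 in precisely the form needed (the bipartite Cheeger estimate, with constants matching $\beta=d^2\sqrt{2\zeta(2-\zeta)}$) and the exact denominator $2+\beta+d\beta/\varepsilon$ in the size bound are the other points demanding care.
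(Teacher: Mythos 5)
Your outline diverges from the paper's argument, and the divergence is exactly where the gap sits. The crux of your Step~3 --- that for every $W\subseteq A$ with $|W|\le\tfrac12|A|$ one has $|\calN(\calN(W))\setminus W|\ge c(\varepsilon,d)\,|W|$, i.e.\ that the two-step graph expands ``away from the bipartite cut'' --- is precisely what you never prove; you yourself label it the main obstacle and only gesture at ``routing the $\varepsilon$-expansion through the almost-bipartite structure.'' This is not a routine consequence of $\varepsilon$-vertex expansion of $(V,E)$: it is a restricted Cheeger-type statement on one side of an approximate bipartition (equivalently, a uniqueness/stability statement about near-$(-1)$ eigenvectors), and it would need its own quantitative proof before the dichotomy follows. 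Several subsidiary claims are also asserted rather than proved: Step~1 invokes a ``bipartite analogue'' of Proposition \ref{Prop:chin} with constants matching $\beta=d^2\sqrt{2\zeta(2-\zeta)}$ and the exact denominator $2+\beta+\tfrac{d\beta}{\varepsilon}$, which the paper never states and you do not derive; Step~2 only gives $|E_A|\lesssim d^{O(1)}\tfrac{\beta}{\varepsilon}|A|$, so there is no reason the bookkeeping lands on exactly $\tfrac{d\beta}{\varepsilon^2}(\varepsilon+d+2)|A|$; and the ``entirely parallel computation'' for $D=A\setminus\tau(A)$ is not parallel, since the hypothesis controls $\calN(\calN(\tau(A)))$ but not $\calN(\calN(\tau(A)^c))$ (one can salvage this, but only at the cost of further factors of $d$).

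For comparison, the paper's proof avoids every one of these points. It takes $A$ directly from the ordinary Cheeger--Buser inequality applied to the multigraph whose normalised adjacency operator is $T^2$, so that $|\calN(\calN(A))\setminus A|\le\beta|A|$ holds by construction (no bipartite Cheeger inequality, no per-$\theta_i$ almost-bipartition estimates). The size bounds come from applying the one-step $\varepsilon$-expansion to $A\cup\calN(A)$ and to its complement. The dichotomy is then obtained not by restricted expansion inside $A$ but by setting $B=A\,\Delta\,(\tau(A))^c$, bounding $|\calN^i(B)\,\Delta\,B|$ and $|\calN^i(B^c)\,\Delta\,B^c|$ through a symmetric-difference computation in which the hypothesis on $\tau$ enters only to give $|\calN(\calN(\tau(A)))\setminus\tau(A)|\le|\calN(\calN(A))\setminus A|\le\beta|A|$, and finally applying the one-step $\varepsilon$-expansion to whichever of $B$, $B^c$ has size at most $\tfrac12|V|$ and translating via $|B^c|=|A\,\Delta\,\tau(A)|=2|A|-2|A\cap\tau(A)|$; the ``exactly one'' clause follows from $\varepsilon\le d-1$ and $\beta<\tfrac{\varepsilon^2}{2d(2d+1)}$. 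If you want to keep your route, the restricted expansion claim and the bipartite Cheeger estimate with matched constants are the two statements you must actually prove; as written, the proposal does not establish the proposition.
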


\begin{proof}
Since the graph $(V, E)$ is an $\varepsilon$-vertex expander with $\varepsilon>0$, it follows that for some vertex $v\in V$ and for some $1\leq i \leq d$, $\theta_i(v) \neq v$. Using $|V| \geq 4$, one obtains
\begin{align*}
\varepsilon |\{v, \theta_i(v)\}|
& \leq |(\calN(v) \cup \calN(\theta_i(v)) ) \setminus \{v, \theta_i(v)\}| \\
& \leq |\calN(v) \setminus \{\theta_i(v)\}| + |\calN(\theta_i(v))\setminus \{v\}|\\
& \leq 2(d-1),
\end{align*}
which implies 
\begin{equation}
\label{Eqn:EpsilonBoundSch}
\varepsilon \leq d-1.
\end{equation}
and hence $\zeta < 1$. Let $T$ denote the normalised adjacency operator of the graph $(V, E)$. Since $T$ has an eigenvalue in $(-1, -1+\zeta]$ and $\zeta <1$, it follows that $T^2$ has an eigenvalue $\nu$ in $[(1-\zeta)^2, 1)$. 

Consider the undirected multi-graph $\calM$ (which may contain multiple edges, and multiple loops) with $V$ as its set of vertices and its edges are obtained by drawing an edge from $v$ to each element of $\calN(\calN(\{v\}))$ (considered as a multiset). In other words, $\calM$ is the undirected multi-graph having $V$ as its set of vertices and $T^2$ as its normalised adjacency operator. Thus the second largest eigenvalue of the normalised adjacency operator of $\calM$ is $\geq \nu\geq (1-\zeta)^2 = 1 - \zeta(2-\zeta)$. Hence the second smallest eigenvalue of the normalised Laplacian operator of $\calM$ is $\leq \zeta(2-\zeta)$. By the discrete Cheeger--Buser inequality (Proposition \ref{Prop:chin}), it follows that the edge Cheeger constant of $\calM$ satisfies 
$$\frac 12 \frakh(\calM)^2 \leq \zeta(2-\zeta),$$
which yields 
$$\frakh(\calM)\leq  \sqrt{2\zeta(2-\zeta)}.$$
Since $(V, E)$ has degree $d$, from Lemma \ref{Lemma:VertexEdgeCons}, it follows that the vertex Cheeger constant of $\calM$ satisfies
$$h(\calM) \leq 
d^2 \frakh(\calM) \leq d^2\sqrt{2\zeta(2-\zeta)}.$$
This implies that for some nonempty subset $A$ of $V$ with $|A| \leq \frac 12 |V|$, 
$$
|\calN(\calN(A))\setminus A| 
\leq |A| d^2 \sqrt{2\zeta(2-\zeta)} 
= |A| \beta
$$
holds. Note that $|A \cup \calN(A)| \geq \frac{|V|}{2}$. Otherwise, we obtain 
\begin{align*}
\varepsilon|A| 
& \leq \varepsilon |A \cup \calN(A)| \\
& \leq |\calN(A \cup \calN(A)) \setminus (A \cup \calN(A))|\\
& \leq |\calN(\calN(A))\setminus A| \\
& \leq |A| d^2 \sqrt{2\zeta(2-\zeta)}\\
& = |A| \beta.
\end{align*}
This implies
$\varepsilon\leq d^2 \sqrt{2\zeta(2-\zeta)} < d^2 \sqrt{4\zeta}$, which contradicts the assumption that $\zeta \leq \frac{\varepsilon^2}{4d^4}$. It follows that 
\begin{align*}
\varepsilon |V \setminus (A \cup \calN(A))|
& \leq  \varepsilon|(A \cup \calN(A))^c|\\
& \leq |\calN((A \cup \calN(A))^c) \setminus (A \cup \calN(A))^c|  \\
& \leq |(A \cup \calN(A)) \cap \calN((A \cup \calN(A))^c)| \\
& \leq \sum_{i = 1}^d 
|(A \cup \calN(A)) \cap \theta_i((A \cup \calN(A))^c)| \\
& \leq \sum_{i = 1}^d |\calN(A \cup \calN(A)) \cap (A \cup \calN(A))^c|\\
& \leq d |\calN(A \cup \calN(A)) \cap (A \cup \calN(A))^c|\\
& \leq d |\calN(A \cup \calN(A)) \setminus (A \cup \calN(A))|\\
& \leq d|\calN(\calN(A))\setminus A| \\
& \leq d|A| d^2 \sqrt{2\zeta(2-\zeta)}\\
& = d |A| \beta,
\end{align*}
which implies 
\begin{align*}
|V| 
& \leq \frac{d\beta}{\varepsilon}|A| + |A \cup \calN(A)|\\
& \leq \frac{d\beta}{\varepsilon}|A| + |A| + |\calN(A)| \\
& \leq \frac{d\beta}{\varepsilon}|A| + |A| + |\calN(\calN(A))| \\
& \leq \frac{d\beta}{\varepsilon}|A| + 2|A| + |\calN(\calN(A))\setminus A|\\
& \leq \frac{d\beta}{\varepsilon}|A| + 2|A| + \beta|A|,
\end{align*}
and hence 
$$\frac{|V|}{2 + \beta + \frac{d\beta}{\varepsilon}} \leq |A| \leq \frac{|V|}{2}.$$

Note that the inequalities 
$$
\frac {2d\beta} { \varepsilon^2} ( \varepsilon + d + 2) 
\leq \frac {2d\beta} { \varepsilon^2}( d -1 + d + 2) 
= \frac {2d\beta} { \varepsilon^2} ( 2d + 1) 
<1 
$$
imply that 
$$\frac{d \beta}{\varepsilon^2} (\varepsilon + d + 2)
< 
1 - \frac {d \beta}{ \varepsilon^2}  ( \varepsilon + d + 2).$$
Hence it suffices to show that one of the inequalities
$$|A \cap (\tau(A))| \leq \frac{d \beta}{\varepsilon^2} (\varepsilon + d + 2)|A|, 
\quad 
|A \cap (\tau(A))| \geq \left( 1 - \frac {d \beta}{ \varepsilon^2}  ( \varepsilon + d + 2) \right) |A|$$
holds.

Let $B = A\Delta(\tau(A))^c$. Let $\id$ denote the identity map from $V$ to $V$. Note that 
\begin{align*}
|\calN^i(B) \Delta B| 
& \leq |\calN^i(A) \Delta \calN^i((\tau(A))^c) \Delta A \Delta(\tau(A))^c| \\
& \leq |\calN^i(A) \Delta \calN^i((\tau(A))^c) \Delta A^c \Delta \tau(A)| \\
& \leq |\calN^i(A) \Delta A^c \Delta \calN^i((\tau(A))^c) \Delta \tau(A)| \\
& \leq |\calN^i(A) \Delta A^c| + |\calN^i((\tau(A))^c) \Delta \tau(A)|\\
& = |\calN^i(A) \Delta A^c| + |\calN^i(\tau(A)) \Delta \tau(A^c)|\\
& = \sum_{\phi\in \{\id, \tau\}} |\calN^i(\phi(A)) \Delta \phi(A^c)|\\
& = \sum_{\phi\in \{\id, \tau\}} \left( |\calN^i(\phi(A))| + |\phi(A^c)| - 2|\calN^i(\phi(A)) \cap \phi(A^c)|\right)\\
& = \sum_{\phi\in \{\id, \tau\}} \left( |V| - 2|\calN^i(\phi(A)) \cap \phi(A^c)|\right)\\
& = \sum_{\phi\in \{\id, \tau\}} \left( |V| - 2|\calN^i(\phi(A))| + 2|\calN^i(\phi(A))|  - 2|\calN^i(\phi(A)) \cap \phi(A^c)|\right)\\
& = \sum_{\phi\in \{\id, \tau\}} \left( |V| - 2|A| + 2|\calN^i(\phi(A)) \cap \phi(A)|\right)\\
& = 2(|V| - 2|A|) + 2\sum_{\phi\in \{\id, \tau\}} |\calN^i(\phi(A)) \cap \phi(A)| \\
& \leq 2(|V| - 2|A|) + 
\frac 2\varepsilon\sum_{\phi\in \{\id, \tau\}} \varepsilon|\calN(\phi(A)) \cap \phi(A)| \\
& \leq 2(|V| - 2|A|) + 
\frac 2\varepsilon\sum_{\phi\in \{\id, \tau\}} |\calN(\calN(\phi(A)) \cap \phi(A)) \setminus (\calN(\phi(A)) \cap \phi(A))| \\
& \leq 2(|V| - 2|A|) + 
\frac 2\varepsilon\sum_{\phi\in \{\id, \tau\}} |\calN(\calN(\phi(A)) \setminus \phi(A)| \\
& \leq 2(|V| - 2|A|) + 
\frac 2\varepsilon\sum_{\phi\in \{\id, \tau\}} |\phi(\calN(\calN(A))) \setminus \phi(A)| \\
& = 2(|V| - 2|A|) + 
\frac 2\varepsilon\sum_{\phi\in \{\id, \tau\}} |\calN(\calN(A)) \setminus A| \\
& \leq 2\left(\beta + \frac{d\beta}\varepsilon \right)|A| +\frac 2\varepsilon 2|A|\beta\\
& =  \frac{2\beta}\varepsilon (\varepsilon + d + 2)|A| .
\end{align*}
It follows that 
$$
|\calN(B) \Delta B|
\leq 
\frac{2d\beta}\varepsilon (\varepsilon + d + 2)|A| .
$$

Note that 
\begin{align*}
|\calN^i(B^c) \Delta B^c| 
& \leq |\calN^i(A) \Delta \calN^i(\tau(A)) \Delta A \Delta\tau(A)| \\
& \leq |\calN^i(A) \Delta \calN^i(\tau(A)) \Delta A^c \Delta (\tau(A))^c| \\
& \leq |\calN^i(A) \Delta A^c \Delta \calN^i(\tau(A)) \Delta (\tau(A))^c| \\
& \leq |\calN^i(A) \Delta A^c| + |\calN^i(\tau(A)) \Delta (\tau(A))^c| )\\
& \leq |\calN^i(A) \Delta A^c| + |\calN^i((\tau(A)^c)) \Delta \tau(A)| )\\
& \leq \frac{2\beta}\varepsilon (\varepsilon + d + 2)|A| .
\end{align*}
It follows that 
$$
|\calN(B^c) \Delta B^c|
\leq 
\frac{2d\beta}\varepsilon (\varepsilon + d + 2)|A| .
$$

We consider the following cases, viz., $|B|\leq \frac{|V|}2, |B| > \frac{|V|}2$. When $|B|\leq \frac{|V|}2$ holds, we obtain 
$$
\varepsilon |B| \leq |\calN(B) \setminus B| \leq |\calN(B) \Delta B|  \leq \frac{2d \beta}\varepsilon (\varepsilon + d + 2)|A|,
$$
which yields 
$$|B| \leq \frac {2d\beta}{ \varepsilon^2} ( \varepsilon + d + 2) |A|.
$$
Since 
\begin{align*}
|V| - |B| 
& = |B^c| \\
& = |A \Delta (\tau (A))| \\
& = |A| - |A\cap (\tau(A))| + |\tau(A)| - |A\cap (\tau(A))| \\
& = 2|A| - 2|A\cap (\tau(A))| 
\end{align*}
holds, we obtain 
$$
2|A \cap (\tau(A)) |
\leq |V| - 2|A| + 2|A \cap (\tau(A))|
= |B| 
\leq \frac{2d \beta}{\varepsilon^2} (\varepsilon + d + 2)|A|.
$$
While $|B| > \frac{|V|}2$ holds, we obtain 
$$
\varepsilon |B^c| \leq |\calN(B^c)\setminus B^c| \leq |\calN(B^c)\Delta B^c|  
\leq \frac{2d\beta}{\varepsilon}(\varepsilon + d + 2) |A|,
$$
which yields 
$$|B^c| \leq \frac{2d\beta}{\varepsilon^2}(\varepsilon + d + 2) |A|.
$$
Since 
$$
|B^c| 
= |A \Delta (\tau(A))| 
= |A| - |A\cap (\tau(A))|  + |(\tau(A)) | - |A\cap (\tau(A))| 
= 2|A| - 2|A\cap (\tau(A))| $$
holds, we obtain 
$$
|A \cap (\tau(A)) |
\geq |A| - \frac {d\beta}{ \varepsilon^2} ( \varepsilon + d + 2)|A|
= \left( 1- \frac {d\beta}{ \varepsilon^2} ( \varepsilon + d + 2)\right)|A|.
$$
This completes the proof. 
\end{proof}

\begin{theorem}\label{thmPrincipalk1}
Suppose $V$ carries a left action of a group $\calG$ such that the following conditions hold. 
\begin{enumerate}
\item No index two subgroup of $\calG$ acts transitively on $V$.

\item The action of $\calG$ on the set $V$ is ``transitive of order $t$'' in the sense that for each $(u, v)\in V\times V$, the equation $gu = v$ has exactly $t$ distinct solutions for $g\in \calG$. In other words, the action of $\calG$ on $V$ is transitive and the stabilizer of each element of $V$ have the same size (equal to $t$).

\item For each $\theta_i, 1\leq i\leq d$ and $v\in V$, there is an automorphism or an anti-automorphism $\psi_{i,v}$ of the group $\calG$ such that one of 
$$\theta_i(g\cdot v) =  \psi_{i,v}(g) \cdot \theta_i(v)$$
and 
$$\theta_i(g\cdot v) =  \psi_{i, v}(g^\mo) \cdot \theta_i(v)$$
holds for any $g\in \calG$. 

\item For any $\tau\in \calG$, 
$
\calN (\calN(\tau(A))) \subseteq \tau(\calN(\calN(A)))
$
holds. 
\end{enumerate}
Assume that the graph $(V, E)$ is undirected and non-bipartite. Assume further that $|V|\geq 4$, the graph $(V, E)$ is an $\varepsilon$-vertex expander for some $\varepsilon>0$. Then the nontrivial eigenvalues of the normalised adjacency operator of this graph are greater than $-1 + \ell_{\varepsilon, d}$ with 
$$\ell_{\varepsilon, d}
=\frac{\varepsilon^4}{2^{12} d^8}.$$
\end{theorem}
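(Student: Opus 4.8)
The plan is to argue by contradiction: suppose the normalised adjacency operator $T$ of $(V, E)$ has a nontrivial eigenvalue $t \leq -1 + \ell_{\varepsilon, d}$, and derive a contradiction with the non-bipartiteness of the graph. The strategy follows the template of \cite[Appendix E]{BGGTExpansionSimpleLie}, feeding the eigenvalue into Proposition \ref{Prop:AExists} to produce a distinguished subset $A$ of $V$ that is ``almost invariant'' under the action of $\calG$, and then showing that this forces an index two subgroup of $\calG$ to act transitively on $V$, contradicting hypothesis (1).

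First I would set $\zeta = \ell_{\varepsilon, d} = \varepsilon^4/(2^{12} d^8)$ and check that this choice satisfies the numerical hypotheses of Proposition \ref{Prop:AExists}, namely $0 < \zeta \leq \varepsilon^2/(4d^4)$ and, with $\beta = d^2\sqrt{2\zeta(2-\zeta)}$, the inequality $\beta < \varepsilon^2/(2d(2d+1))$; this is a routine estimate since $\beta$ is of order $d^2\sqrt{\zeta} = \varepsilon^2/(2^5 d^2)$ up to the $\sqrt{2-\zeta}$ factor, which is bounded by $\sqrt 2$. Applying Proposition \ref{Prop:AExists} with $\tau$ ranging over the elements of $\calG$ (condition (4) of the present theorem supplies exactly the hypothesis $\calN(\calN(\tau(A))) \subseteq \tau(\calN(\calN(A)))$ needed there), I obtain a nonempty $A \subseteq V$ with $|A| \leq \frac12 |V|$ and a lower bound on $|A|$, such that for every $\tau \in \calG$ either $|A \cap \tau(A)|$ is ``small'' (at most $\frac{d\beta}{\varepsilon^2}(\varepsilon + d + 2)|A|$) or ``large'' (at least $(1 - \frac{d\beta}{\varepsilon^2}(\varepsilon + d + 2))|A|$). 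Write $\gamma = \frac{d\beta}{\varepsilon^2}(\varepsilon + d + 2)$, which by the displayed inequalities in the proof of Proposition \ref{Prop:AExists} satisfies $2\gamma < 1$, so $\gamma < \frac12$.

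Next I would define $\calH = \{ g \in \calG : |A \cap g(A)| \text{ is large} \}$ and argue that $\calH$ is a subgroup of $\calG$: it contains the identity, and a standard inclusion--exclusion / pigeonhole argument (using $\gamma < \frac12$ and the lower bound $|A| \geq \delta |V|$ from Proposition \ref{Prop:AExists}, so that $|g(A) \cap h(A)|$ large for two elements forces $|gh^{-1}(A) \cap A|$ large) shows closure under products and inverses. Here the transitivity of order $t$ (condition (2)) is what lets me pass between counting in $\calG$ and counting in $V$: the fibres of the action map all have the same size, so a subset of $V$ of relative size $\mu$ pulls back to a subset of $\calG$ of relative size $\mu$. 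Then I would show $\calH$ has index at most two in $\calG$: the translates $\{g(A)\}_{g}$ fall into at most two ``clusters'' (those close to $A$ and those close to $A^c$), because $g(A)$ and $g'(A)$ with $g\calH \neq g'\calH$ must be nearly disjoint, and two subsets of $V$ each of size roughly $\delta|V| \geq \frac{1}{2+\beta+d\beta/\varepsilon}|V|$ that are pairwise nearly disjoint can number at most two (their sizes sum to at most $|V|$ plus a small error). If $[\calG : \calH] = 1$ then $\calH = \calG$ acts transitively, contradicting (1) directly; if $[\calG : \calH] = 2$ then $\calH$ is an index two subgroup acting transitively on $V$ — since for any $u, v \in V$ one can reach $v$ from $u$ either by an element of $\calH$ or, composing with a fixed element outside $\calH$, reduce to the first case using that $A \cup (\text{stuff})$ already meets everything — again contradicting (1). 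The one remaining thread is to rule out the degenerate possibility that $\calH = \calG$ but $A$ is essentially all of $V$ or essentially empty; the two-sided size bound $\delta|V| \leq |A| \leq \frac12|V|$ from Proposition \ref{Prop:AExists} prevents this, and moreover the existence of such an almost-invariant $A$ of intermediate size is precisely what non-bipartiteness should forbid — this is where I expect to invoke that $(V,E)$ is non-bipartite to close the last case.

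The main obstacle, I expect, is the bookkeeping in the subgroup argument: propagating the ``small vs.\ large'' dichotomy through products requires that the error terms compound controllably, i.e.\ that $O(\gamma)$ errors added a bounded number of times stay below the $\frac12$ threshold, and this in turn is why $\zeta$ must be taken as small as $\varepsilon^4/(2^{12}d^8)$ rather than merely $\varepsilon^2/(4d^4)$ — the extra factor buys room for the triangle-inequality losses when comparing $A$, $g(A)$, $h(A)$, $gh(A)$. Verifying that the specific constant $2^{12}d^8$ works (as opposed to some larger power) is the delicate quantitative point; conceptually, however, the argument is the clean ``almost-invariant set $\Rightarrow$ almost-subgroup $\Rightarrow$ index $\leq 2$ subgroup, contradicting non-bipartiteness'' dichotomy, and conditions (2) and (3) are exactly the hypotheses that make the transfer between the combinatorics on $V$ and the group structure of $\calG$ legitimate.
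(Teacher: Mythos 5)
There is a genuine gap in your endgame, and it concerns exactly the step where the structural hypotheses of the theorem must be used. First, a smaller point: in the case $[\calG : H] = 1$ (writing $H$ for your set of group elements with $|A \cap g(A)|$ large), transitivity of $\calG$ contradicts nothing — hypothesis (1) only forbids \emph{index two} subgroups from acting transitively, and $\calG$ acts transitively by hypothesis (2) anyway. The correct way to rule out $H = \calG$ is a counting argument using the ``transitive of order $t$'' property: the map $\coprod_{g\in\calG} (A\cap g^\mo A) \to A\times A$, $x\mapsto (x,gx)$, has all fibres of size $t$, so $|A|^2 = \frac1t\sum_{g\in\calG}|A\cap g^\mo A|$; if every $g$ had $|A\cap g^\mo A|\geq r|A|$ this would force $|\calG|\leq \frac{t}{2r}|V| < t|V|$, contradicting $|\calG| = t|V|$ (up to the stabilizer count). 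Your proposal never performs this counting, and your stated use of (1) here is invalid.

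Second, and more seriously, in the case $[\calG : H] = 2$ you assert that $H$ ``acts transitively on $V$,'' contradicting (1). You cannot prove this, and in fact it is false in the situation being excluded: for a bipartite graph of this type the index-two subgroup $H$ has precisely the two parts as its orbits, which is why the theorem needs the non-bipartiteness hypothesis at all. The actual argument runs the other way: hypothesis (1) tells you $H$ does \emph{not} act transitively, hence (by transitivity of $\calG$) it has exactly two orbits $\calO$ and $\calO^c$; one then shows $A$ is almost entirely contained in one orbit, deduces the quantitative bound $|\theta_i(\calO)\cap\calO| < \frac{|H|}{2t}$ for every $i$, and finally uses hypothesis (3) — the compatibility $\theta_i(g\cdot v) = \psi_{i,v}(g^{\pm 1})\cdot\theta_i(v)$ — together with the elementary fact that two index-two subgroups $H_1, H_2$ of $\calG$ satisfy $|H_1\cap H_2|\geq \frac{|H_1|}{2}$ and $|H_1^c\cap H_2^c|\geq\frac{|H_1|}{2}$, to show that an edge inside $\calO$ (or inside $\calO^c$) would force $|\theta_i(\calO)\cap\calO| \geq \frac{|H|}{2t}$. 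Hence no edge joins two vertices of the same orbit, so $(V,E)$ is bipartite, contradicting the hypothesis. Your proposal invokes non-bipartiteness only as a vague closing remark and never uses hypothesis (3) in any concrete way; without this mechanism the contradiction is not reached, so the proof as proposed does not go through.
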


\begin{proof}
On the contrary, let us assume that a nontrivial eigenvalue of the normalised adjacency operator of the graph $(V, E)$ lies in the interval $\left[-1, -1 +  \ell_{\varepsilon, d}\right]$. Since $(V, E)$ is non-bipartite, it follows that $-1$ is not an eigenvalue of its normalised adjacency operator. Hence an eigenvalue of the normalised adjacency operator of the graph $(V, E)$ lies in the interval $(-1, -1+ \ell_{\varepsilon, d}]$. Set 
\begin{align*}
\gamma & = d^2 \sqrt{2 \ell_{\varepsilon, d}(2- \ell_{\varepsilon, d})},\\
r & =  1- \frac {d \gamma}{ \varepsilon^2}  ( \varepsilon + d + 2).
\end{align*}
Since $\ell_{\varepsilon, d} = \frac{\varepsilon^4}{2^{12} d^8}$, we have 
$$\gamma =  d^2 \sqrt{2 \ell_{\varepsilon, d}(2- \ell_{\varepsilon, d})} <  d^2 \sqrt{4 \ell_{\varepsilon, d}} \leqslant \frac{\varepsilon^{2}}{2^5d^2}.$$
$$
1 - r = \frac {d \gamma}{ \varepsilon^2}  ( \varepsilon + d + 2)
\leq \frac {d \gamma}{ \varepsilon^2}  (2d+1) 
< \frac 3{2^3\sqrt 2} < \frac 13.$$
Consequently, 
\begin{equation}
\label{Eqn:BoundssigmaCayleySum}
\ell_{\varepsilon, d}\leq \frac{\varepsilon^2}{4d^4}, 
\gamma < \frac{\varepsilon^2}{2d(2d+1)} \text{ and }  r> \frac 23.
\end{equation}
 
Define the subset $H$ of $\calG$ by 
$$H  :=
\{
\tau\in \calG \,:\,\, |A \cap  (\tau(A))| \geq r|A|
\}.$$
Note that $H$ contains the identity element of $\calG$. By the triangle inequality, 
\begin{align*}
|A \setminus (\tau_1(\tau_2(A)))| 
& \leq |A \setminus (\tau_1(A)) | + |(\tau_1(A)) \setminus (\tau_1(\tau_2(A)))| \\
& =  |A \setminus  (\tau_1(A)) | + |A  \setminus (\tau_2(A)) |\\
& = |A | -  |A \cap (\tau_1(A))  | + |A | - |A  \cap (\tau_2(A)) |\\
& \leq 2|A| - 2r |A| .
\end{align*}
Consequently, 
\begin{align*}
|A \cap (\tau_1(\tau_2(A)))| 
& = |A | - |A \setminus (\tau_1(\tau_2(A)))| \\
& \geq |A | - 2|A| + 2r |A| \\
& = (2r - 1) |A|.
\end{align*}
If $|A \cap (\tau_1(\tau_2(A)))| \leq (1-r) |A|$, then we obtain 
$$(1-r) |A|
\geq |A \cap (\tau_1(\tau_2(A)))| 
\geq  (2r - 1) |A|,$$
which implies $r\leq \frac 23$. Since $r>\frac 23$, it follows that the inequality 
$|A \cap (\tau_1(\tau_2(A)))| \leq (1-r) |A|$
does not hold. Hence, by Proposition \ref{Prop:AExists}(1), $H$ contains $\tau_1\tau_2$. So $H$ is a subgroup of $\calG$. 

For any $g\in \calG$, the map
$$A\cap g^\mo A \to A\times A, \quad 
x\mapsto (x, gx)$$ 
is well-defined, and induces a map 
$$\varphi: \coprod _{g\in \calG} A\cap g^\mo A \to A\times A.$$
Each of its fibre contains exactly $t$ elements and hence 
$$
|A|^2
= im(\varphi)
= \sum_{x\in im(\varphi) } \frac{|\varphi^\mo (x)|} {|\varphi^\mo (x)|}
= \frac 1t \sum_{x\in im(\varphi)} |\varphi^\mo (x)|
= \frac 1t \sum_{g\in \calG} |A\cap g^\mo A|.$$

If $\calG = H$, we obtain 
$$|A| \cdot \frac{|V|}{2} 
\geq |A|^2 = \frac 1t \sum_{g\in \calG} |A\cap g^\mo A|
\geq \frac 1t |\calG| \cdot r|A|,$$
which gives 
$$|\calG| \leq \frac t{2r}{|V|}.$$
This contradicts that $|\calG| \geq t|V|$. Hence $H$ is a proper subgroup of $\calG$. 

The following estimate 
$$
t |A|^2 
= \sum_{g\in \calG} |A\cap g^\mo A| 
\leq |H| |A| + \frac{d\gamma}{\varepsilon^2}(\varepsilon + d+ 2)|A||\calG\setminus H| $$
implies 
$$t |A|  
\leq |H| + \frac{d\gamma}{\varepsilon^2}(\varepsilon + d+ 2)(|\calG| - |H|).$$
Using Proposition \ref{Prop:AExists}(1), we obtain 
$$
\left(\frac{1}{2+ \gamma + \frac{d\gamma}{\varepsilon}}\right) |\calG| 
= \left(\frac{t}{2+ \gamma + \frac{d\gamma}{\varepsilon}}\right) |V| 
\leq \frac{d\gamma}{\varepsilon^2}(\varepsilon + d+ 2)|\calG| 
+  \left(1 - \frac{d\gamma}{\varepsilon^2}(\varepsilon + d+ 2)\right) |H|.$$
We claim that $H$ is a subgroup of $\calG$ of index two. To prove this claim, it suffices to show that 
\begin{equation}
\label{Eqn:13rdInePri}
\frac 13 \left(1 - \frac{d\gamma}{\varepsilon^2}(\varepsilon + d+ 2)\right)
< 
\left(\frac{1}{2+ \gamma + \frac{d\gamma}{\varepsilon}}\right) - \frac{d\gamma}{\varepsilon^2}(\varepsilon + d+ 2),
\end{equation}
i.e., 
$$\left(2 + \gamma + \frac{d\gamma}{\varepsilon}\right) \left( 1 + \frac{2d\gamma}{\varepsilon^2}(\varepsilon + d+ 2) \right) < 3,$$
which is equivalent to 
\begin{equation}
\label{Eqn:13rdInequality}
\left(\gamma + \frac{d\gamma}{\varepsilon}\right) + \frac{2d\gamma}{\varepsilon^{2}}(\varepsilon + d+ 2)\left(2 + \gamma + \frac{d\gamma}{\varepsilon}\right) < 1.
\end{equation}

Assume that $\zeta \leq \frac{\varepsilon^4}{2^9 d^8}$. Since 
$$\gamma \leq
\frac{\varepsilon^{2}}{2^3\sqrt{2}d^2}
$$
and $d \geq 2$ (by Equation \eqref{Eqn:EpsilonBoundSch}), we obtain 
\begin{align*}
& \gamma + \frac{d\gamma}{\varepsilon} + \frac{2d\gamma}{\varepsilon^{2}}(\varepsilon + d+ 2)\left(2 + \gamma + \frac{d\gamma}{\varepsilon}\right)\\
& = \frac{\gamma}{\varepsilon}(\varepsilon + d) + \frac{2d\gamma}{\varepsilon^{2}}(\varepsilon + d+ 2)\left(2 + \frac{\gamma}{\varepsilon}(\varepsilon+ d)\right) \\
& = \frac{\gamma}{\varepsilon}\left(\varepsilon +d+ \frac{4d(\varepsilon +d+ 2)}{\varepsilon} \right) + \frac{2d\gamma^2}{\varepsilon^3}(\varepsilon+d)(\varepsilon +  d + 2)\\
& = \frac{\gamma}{\varepsilon^2}\left(\varepsilon(\varepsilon +d)+ 4d(\varepsilon +d+ 2)\right) + \frac{2d\varepsilon \gamma^2}{\varepsilon^4}(\varepsilon+d)(\varepsilon +  d + 2)\\
& \leq \frac{\gamma}{\varepsilon^2}\left((d-1) (2d-1) + 4d(2d+1) \right) + \frac{2d\gamma^2}{\varepsilon^4} (d-1)(2d-1)(2d+1)\\
& = \frac{1}{8\sqrt 2 d^2}(10d^2 + d +1) + \frac{1}{64d^3}(d-1)(2d-1)(2d+1)\\
& = \frac{(d-1)(4d^2-1) + 4\sqrt 2 d (10d^2 + d + 1)}{64d^3}\\
& \leq \frac{(d-1)(4d^2-1) + 41\sqrt 2 d^3 + 8\sqrt 2 d}{64d^3}\\
& \leq \frac{(41\sqrt 2 + 4) d^3 + 8\sqrt 2 d - 4d^2 - d + 1}{64d^3}\\
& \leq \frac{(41\sqrt 2 + 4) d^3 + 4d(2 \sqrt 2  - d) + (1-d)}{64d^3}\\
& < \frac{41\sqrt 2 + 4}{64} + \frac{\sqrt 2 - 1} {32} \\
& < 1.
\end{align*}
So the claim that $H$ is a subgroup of $\calG$ of index two follows. Since no index two subgroup of $\calG$ acts transitively on $V$, the action of $H$ on $V$ has at least two distinct orbits. Since the action of $\calG$ on $V$ is transitive, the action of $H$ on $V$ has exactly two orbits. Let $\calO_1, \calO_2$ denote the orbits of the action of $H$ on $V$. 

Note that for any $g\in H$ and for any subset $B$ of $V$ contained in $\calO_1$ or $\calO_2$, the map 
$$B\cap g^\mo B \to B\times B, \quad 
x\mapsto (x, gx)$$ 
is well-defined, and induces a map 
$$\varphi: \coprod _{g\in H} B\cap g^\mo B \to B\times B.$$
Each of its fibre contains exactly $t$ elements and hence 
$$
|B|^2
= im(\varphi)
= \sum_{x\in im(\varphi) } \frac{|\varphi^\mo (x)|} {|\varphi^\mo (x)|}
= \frac 1t \sum_{x\in im(\varphi)} |\varphi^\mo (x)|
= \frac 1t \sum_{g\in H} |B\cap g^\mo B|.$$
For any $g\in H$, we have 
\begin{align*}
A\cap g^\mo A
& = 
((A\cap \calO_1) \cup (A\cap \calO_2))
\cap 
(g^\mo ((A\cap \calO_1) \cup (A\cap \calO_2)))\\
& = 
((A\cap \calO_1) \cap 
(g^\mo (A\cap \calO_1))
\cup 
((A\cap \calO_2) \cap 
(g^\mo (A\cap \calO_2)),
\end{align*}
which yields 
$$
|A \cap g^\mo A|  = \sum_{B = A\cap \calO_1, A\cap \calO_2}|B\cap g^\mo B| .$$
So 
\begin{align*}
|A\cap \calO_1|^2 + |A\cap \calO_2|^2  
& = \sum_{B = A\cap \calO_1, A\cap \calO_2} |B|^2\\
& = \frac 1t \sum_{B = A\cap \calO_1, A\cap \calO_2}\sum_{g\in H} |B\cap g^\mo B| \\
& = \frac 1t \sum_{g\in H} \sum_{B = A\cap \calO_1, A\cap \calO_2}|B\cap g^\mo B| \\
& = \frac 1t \sum_{g\in H} |A \cap g^\mo A| .
\end{align*}
It follows that 
\begin{align*}
|A|^2 - 2|A\cap \calO_1||A\cap \calO_2| 
& = (|A\cap \calO_1| + |A\cap \calO_2|)^2 - 2|A\cap \calO_1||A\cap \calO_2| \\
& = |A\cap \calO_1|^2 + |A\cap \calO_2|^2  \\
& = \frac 1t \sum_{g\in H} |A\cap gA| \\
& = \frac 1t \sum_{g\in \calG} |A\cap gA| - \frac 1t \sum_{g\in H^c} |A\cap gA| \\ 
& = |A|^2 - \frac 1t \sum_{g\in H^c} |A\cap gA| \\ 
& \geq |A|^2 - \sum_{g\in H^c} \frac{d \gamma}{t\varepsilon^2} (\varepsilon + d + 2)|A| \\
& = |A|^2 - \frac{d \gamma}{t\varepsilon^2} (\varepsilon + d + 2)|A||H|.
\end{align*}
This implies that 
$$
|A\cap \calO_1||A\cap \calO_2| 
\leq \frac{d \gamma}{4t\varepsilon^2} (\varepsilon + d + 2)|A||\calG|
\leq \frac{d \gamma}{4t\varepsilon^2} (\varepsilon + d + 2) \frac{t|V|^2}{2}\leq \frac{d \gamma}{8\varepsilon^2} (\varepsilon + d + 2)|V|^2.
$$
Hence, for the orbit $\calO$ of some element of $V$ under the action of $H$, we have 
$$
|A\cap \calO^c| \leq 
\sqrt{\frac{d\gamma}{8\varepsilon^2} (\varepsilon + d + 2) }|V|.$$
For any $1\leq i \leq d$, 
\begin{align*}
|\theta_i(\calO)\cap \calO |
& = |\theta_i(\calO\cap A) \cap \calO| + |\theta_i(\calO\setminus A) \cap \calO|\\
& \leq |\theta_i(A) \cap \calO| + |\calO\setminus A|\\
& = |\theta_i(A) \cap (\calO\cap A) | + |\theta_i(\calO\cap A) \cap (\calO\setminus A) | + |\calO\setminus A|\\ 
& \leq |\theta_i(A) \cap A| + 2|\calO\setminus A|\\ 
& = |\theta_i(A) \cap A| + 2|\calO| - 2|\calO\cap A|\\ 
& = |\theta_i(A) \cap A| + |V| - 2|A| + 2|A\cap \calO^c|\\ 
& \leq \frac \gamma\varepsilon |A| + \left(2 + \gamma + \frac{d\gamma}{\varepsilon}\right)|A| - 2|A| + 2\sqrt{\frac{d\gamma}{8\varepsilon^2} (\varepsilon + d + 2) }|V| \\
& = \frac \gamma\varepsilon |A| + \left(\gamma + \frac{d\gamma}{\varepsilon}\right)|A|  + 2\sqrt{\frac{d\gamma}{8\varepsilon^2} (\varepsilon + d + 2) }|V| \\
& \leq \left(\frac \gamma\varepsilon (d+1 + \varepsilon)  + \sqrt{\frac{2d\gamma}{\varepsilon^2} (\varepsilon + d + 2) }\right) \frac{|V|}{2} \\
& \leq \left(\frac {2d\gamma}\varepsilon |H| + \sqrt{\frac{2d\gamma}{\varepsilon^2} (2d + 1) }\right) \frac{|V|}{2} \\
& \leq \left(\frac {2d\gamma}\varepsilon  + \sqrt{\frac{6d^2\gamma}{\varepsilon^2}} \right) \frac{|V|}{2}\\
& < \left(\frac {1}{2^4}  + \sqrt{\frac{6}{2^5}} \right) \frac{|V|}{2}\\
& < \frac{|V|}{4} \\
& = \frac{|\calG|}{4t} \\
& = \frac{|H|}{2t}.
\end{align*}

Note that for any two subgroup $H_1, H_2$ of $\calG$ of index two, the inequalities 
$$|H_1\cap H_2| \geq \frac{|H_1|}{2}, |H_1^c\cap H_2^c| \geq \frac{|H_1|}{2}$$
hold. Indeed, for any $x\in H_1 \cap H_2^c$, the `left multiplication by $x$' map induces an injection from $H_1\cap H_2^c\to H_1 \cap H_2$. This implies that  
$$ |H_1\cap H_2| 
\geq |H_1 \cap H_2^c|,$$
which yields
$$ |H_1\cap H_2| 
\geq \frac 12 ( |H_1\cap H_2^c|  + |H_1\cap H_2| ) = \frac{|H_1|}{2}.$$
It follows that $|H_1\cap H_2^c| \leq \frac{|H_1|}2$, which implies $|H_1^c\cap H_2^c| \geq \frac{|H_1|}2$. 

Note that no two elements of $\calO$ are adjacent in $(V, E)$. Otherwise, for $u, v\in \calO$, $u = \theta_i(v)$ for some $i$ and hence 
\begin{align*}
|\theta_i(\calO)\cap \calO|
& = |\theta_i(Hv) \cap Hu | \\
& = | \psi_{i,v} (H) \theta_i(v) \cap H u| \\
& = |\psi_{i,v} (H) u \cap H u|\\
& \geq |(\psi_{i,v} (H) \cap H) u|\\
& \geq \frac{|\psi_{i,v} (H) \cap H|}{t} \\
& \geq \frac{|H|}{2t}.
\end{align*}
Moreover, no two elements of $\calO^c$ are adjacent in $(V, E)$. Otherwise, for $u, v\in \calO^c$, $u = \theta_i(v)$ for some $i$ and hence
\begin{align*}
|\theta_i(\calO)\cap \calO|
& = |\theta_i(H^c v) \cap H^c u | \\
& = |\psi_{i,v} (H^c) \theta_i(v) \cap H^c u|\\
& = |\psi_{i,v} (H^c) u \cap H^c u| \\
& \geq |(\psi_{i,v} (H^c) \cap H^c) u| \\
& \geq \frac{|\psi_{i,v} (H^c) \cap H^c|}{t} \\
& \geq \frac{|H|}{2t}.
\end{align*}
So $(V, E)$ is bipartite, contradicting the hypothesis. Hence, the nontrivial eigenvalues of the normalised adjacency operator of this graph are greater than $-1 + \ell_{\varepsilon, d}$ with 
$$\ell_{\varepsilon, d}
=\frac{\varepsilon^4}{2^{12} d^8}.$$
\end{proof}

In the following, $(V, E)^k$ denotes the graph having $V$ as its set of vertices and the $k$-th power of the adjacency operator of $(V,E)$ as its adjacency operator. 

\begin{theorem}
\label{thmPrincipal}
Suppose $V$ carries a left action of a group $\calG$, $(V, E)$ is undirected, $|V|\geq 4$ and $k\geq 1$ be an odd integer such that the following conditions hold. 
\begin{enumerate}
\item No index two subgroup of $\calG$ acts transitively on $V$.

\item The action of $\calG$ on the set $V$ is ``transitive of order $t$'' in the sense that for each $(u, v)\in V\times V$, the equation $gu = v$ has exactly $t$ distinct solutions for $g\in \calG$. In other words, the action of $\calG$ on $V$ is transitive and the stabilizer of each element of $V$ have the same size (equal to $t$).

\item 
$(V, E)^k$ is an $\varepsilon_k$-vertex expander with $\varepsilon_k >0$. 

\item 
For $1\leq i_1, \cdots, i_k \leq d$ and $v\in V$, there is an automorphism or an anti-automorphism $\psi_{(i_1, \cdots, i_k), v}$ of the group $\calG$ such that one of 
$$
(\theta_{i_1} \circ \cdots \circ \theta_{i_k} )(g\cdot v) 
= 
\psi_{(i_1, \cdots, i_k),  v}(g) \cdot 
(\theta_{i_1} \circ \cdots  \circ \theta_{i_k} )(v) 
$$
and 
$$
(\theta_{i_1} \circ \cdots  \circ \theta_{i_k} )(g\cdot v) 
= 
\psi_{(i_1, \cdots, i_k),  v}(g^\mo) \cdot 
(\theta_{i_1} \circ \cdots \circ \theta_{i_k} )(v) 
$$
holds for any $g\in \calG$. 

\item For any $\tau\in \calG$ and for any subset $A$ of $V$, 
$
\calN^k (\calN^k(\tau(A))) \subseteq \tau(\calN^k(\calN^k(A)))
$
holds. 
\end{enumerate}
Then the nontrivial eigenvalues of the normalised adjacency operator of $(V, E)$ are greater than 
$$
\left(
-1 + \frac {1}{2^{12} d^{8k}}
\varepsilon_k^4
\right)^{1/k}.
$$
If $(V, E)$ is an $\varepsilon$-vertex expander with $\varepsilon >0$, then the vertex Cheeger constant of $(V, E)^k$ is
$$
\geq 
\frac 12 
\left(
1 - 
\left(
1 - \frac{\varepsilon^2}{2d^2}
\right)^k
\right)
.$$
\end{theorem}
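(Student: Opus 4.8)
The plan is to obtain both assertions by passing to the power graph $(V,E)^k$: the spectral gap statement will follow by applying Theorem \ref{thmPrincipalk1} to $(V,E)^k$, and the bound on the vertex Cheeger constant will follow by chaining Lemma \ref{Lemma:VertexEdgeCons} with the two halves of the discrete Cheeger--Buser inequality (Proposition \ref{Prop:chin}).

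\emph{Spectral gap.} Write $T$ for the normalised adjacency operator of $(V,E)$; then the normalised adjacency operator of $(V,E)^k$ is $T^k$, and $(V,E)^k$ is an undirected $d^k$-regular multigraph (a power of a symmetric operator is symmetric) whose $d^k$ defining permutations are the compositions $\theta_{i_1}\circ\cdots\circ\theta_{i_k}$, $(i_1,\dots,i_k)\in\{1,\dots,d\}^k$, and whose neighbourhood operator is $\calN^k$. With these identifications, hypotheses (1), (2), (4), (5) of the theorem restate hypotheses (1)--(4) of Theorem \ref{thmPrincipalk1} for $(V,E)^k$, hypothesis (3) says $(V,E)^k$ is an $\varepsilon_k$-vertex expander, and $(V,E)^k$ is non-bipartite since $(V,E)$ is and $k$ is odd. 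Hence, applying Theorem \ref{thmPrincipalk1} to $(V,E)^k$ (with $d$ and $\varepsilon$ there replaced by $d^k$ and $\varepsilon_k$), the nontrivial eigenvalues of $T^k$ exceed $-1+\ell_{\varepsilon_k,d^k}=-1+\varepsilon_k^4/(2^{12}d^{8k})$. Now if $t$ is a nontrivial eigenvalue of $T$, say $Tf=tf$ with $f$ orthogonal to the constant functions, then $T^kf=t^kf$, so $t^k$ is a nontrivial eigenvalue of $T^k$ (and $t^k\neq1$, since $t\neq1$ and $k$ is odd), whence $t^k>-1+\varepsilon_k^4/(2^{12}d^{8k})$. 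Since $k$ is odd, $x\mapsto x^k$ is strictly increasing on $[-1,1]$, so this is equivalent to $t>\bigl(-1+\varepsilon_k^4/(2^{12}d^{8k})\bigr)^{1/k}$, the asserted bound; the same monotonicity, with $0<\varepsilon_k^4/(2^{12}d^{8k})<1$, shows the real $k$-th root on the right lies in $(-1,0)$.

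\emph{Vertex Cheeger constant.} Assume $(V,E)$ is an $\varepsilon$-vertex expander with $\varepsilon>0$; then $(V,E)$ is connected, hence so is $(V,E)^k$ (each edge $\{v,\theta_i(v)\}$ of $(V,E)$ survives in $(V,E)^k$ via the alternating length-$k$ walk $v,\theta_i(v),v,\dots$), so $\lambda_2$ is meaningful for both. By Lemma \ref{Lemma:VertexEdgeCons}, $\frakh((V,E))\geq h((V,E))/d\geq\varepsilon/d$, and the lower bound in Proposition \ref{Prop:chin} gives $\lambda_2((V,E))\geq\frakh((V,E))^2/2\geq\varepsilon^2/(2d^2)$; equivalently the second largest eigenvalue of $T$ is at most $1-\varepsilon^2/(2d^2)$. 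As $x\mapsto x^k$ is strictly increasing on $[-1,1]$, the eigenvalues of $T^k$ are the $k$-th powers of those of $T$, in the same order; hence the second largest eigenvalue of $T^k$ is at most $(1-\varepsilon^2/(2d^2))^k$, i.e.\ $\lambda_2((V,E)^k)\geq1-(1-\varepsilon^2/(2d^2))^k$. The upper bound in Proposition \ref{Prop:chin}, applied to the $d^k$-regular graph $(V,E)^k$, gives $\frakh((V,E)^k)\geq\lambda_2((V,E)^k)/2$, and Lemma \ref{Lemma:VertexEdgeCons} gives $h((V,E)^k)\geq\frakh((V,E)^k)$; combining, $h((V,E)^k)\geq\frac12\bigl(1-(1-\varepsilon^2/(2d^2))^k\bigr)$.

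I do not foresee a serious obstacle: the real content lies in Theorem \ref{thmPrincipalk1} and in the Cheeger--Buser machinery, both already available. The points to handle carefully are (a) checking that hypotheses (1)--(5) of the theorem transcribe verbatim into the hypotheses of Theorem \ref{thmPrincipalk1} for $(V,E)^k$ --- in particular that $\calN^k$ is the neighbourhood operator of $(V,E)^k$ and that the $d^k$ compositions $\theta_{i_1}\circ\cdots\circ\theta_{i_k}$ realise its edges --- and (b) the sign-and-monotonicity bookkeeping for $x\mapsto x^k$, where the oddness of $k$ is what guarantees that inequality directions, the interval $[-1,1]$, and the property of being a nontrivial eigenvalue are all preserved on passing between $T$ and $T^k$; the non-bipartiteness of $(V,E)$ (hence of $(V,E)^k$) is also required in order to invoke Theorem \ref{thmPrincipalk1}.
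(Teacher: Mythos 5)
Your proof follows the paper's own argument essentially verbatim: apply Theorem \ref{thmPrincipalk1} to $(V,E)^k$ and take $k$-th roots (using that $k$ is odd and $x\mapsto x^k$ is monotone on $[-1,1]$), then bound the vertex Cheeger constant of $(V,E)^k$ by combining Lemma \ref{Lemma:VertexEdgeCons} with both halves of Proposition \ref{Prop:chin} — you merely spell out the eigenvalue bookkeeping that the paper leaves implicit. The one caveat is that you justify non-bipartiteness of $(V,E)^k$ by assuming $(V,E)$ is non-bipartite, which is not among the stated hypotheses of Theorem \ref{thmPrincipal}; however, the paper's own proof silently relies on the same non-bipartiteness condition when invoking Theorem \ref{thmPrincipalk1}, so this is an issue inherited from the paper rather than a defect of your argument.
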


\begin{proof}
By Theorem \ref{thmPrincipalk1}, the nontrivial spectrum of the adjacency operator of $(V, E)^k$ is bounded 
away from $-1 + \frac {\varepsilon_k^4}{2^{12} d^{8k}}$. Since $k$ is odd, the nontrivial spectrum of $(V, E)$ is bounded away from 
$$
\left(
-1 + \frac {1}{2^{12} d^{8k}}
\varepsilon_k^4
\right)^{1/k}
.$$

Since $(V, E)$ is an $\varepsilon$-vertex expander, the largest nontrivial eigenvalue of its adjacency operator is bounded by $1 - \frac{\varepsilon^2}{2d^2}$. Since $k$ is odd, the largest nontrivial eigenvalue of the adjacency operator of $(V, E)^k$ is bounded by  
$$
\left(
1 - \frac{\varepsilon^2}{2d^2}
\right)^k
.$$
By Lemma \ref{Lemma:VertexEdgeCons} and the discrete Cheeger--Buser inequality (Proposition \ref{Prop:chin}), the vertex Cheeger constant of $(V, E)^k$ is
$$
\geq 
\frac 12 
\left(
1 - 
\left(
1 - \frac{\varepsilon^2}{2d^2}
\right)^k
\right)
.$$
\end{proof}

\section{Spectral expansion of the Cayley and Cayley sum graphs twisted by automorphisms}
\label{Sec:Twists}

Let $G$ be a finite group, $S$ be a subset of $G$ and $\sigma$ be a group automorphism of $G$.

Consider the twist $C(G, S)^\sigma$ of the Cayley graph $C(G, S)$ by the automorphism $\sigma$. The graph $C(G, S)^\sigma$ has $G$ as its set of vertices, and there is an edge from $x$ to $y$ whenever $y = \sigma(xs)$ for some $s\in S$. Roughly speaking, the twisted Cayley graph $C(G, S)^\sigma$ has the same set of vertices as that of the Cayley graph $C(G, S)$, and given a vertex $x$ in $C(G, S)^\sigma$, its adjacent vertices are precisely the translates of the adjacent vertices of $x$ in $C(G, S)$ under $\sigma$. 

Consider the twist $C_\Sigma(G, S)^\sigma$ of the Cayley sum graph $C_\Sigma(G, S)$ by the automorphism $\sigma$. The graph $C_\Sigma(G, S)^\sigma$ has $G$ as its set of vertices, and there is an edge from $x$ to $y$ whenever $y = \sigma(x^\mo s)$ for some $s\in S$. Roughly speaking, the twisted Cayley sum graph $C_\Sigma(G, S)^\sigma$ has the same set of vertices as that of the Cayley sum graph $C_\Sigma(G, S)$, and given a vertex $x$ in $C_\Sigma(G, S)^\sigma$, its adjacent vertices are precisely the translates of the adjacent vertices of $x$ in $C_\Sigma(G, S)$ under $\sigma$. 

\subsection{The twisted Cayley graph}

For a subset $A$ of $G$, its neighbourhood in $C(G, S)^\sigma$ is denote by $\scrN(A)$. Given $g_1, \cdots, g_r \in G$,  
$\prod_{i = 1}^r g_i$
denotes the product 
$g_1 \cdots g_r$. 

\begin{proof}[Proof of Theorem \ref{Thm:Bdd}(1), (2)]
Let $\calG$ denote the group $G$ and $m$ be a positive integer. Consider the action of $\calG$ on the set of vertices of $(C(G, S)^\sigma)^m$ by left multiplication, i.e., 
$$g\cdot v = gv$$
for any $g\in \calG, v\in G$. 

For an element $(s_1, \cdots, s_m)\in S^m$, let $\theta:G\to G$ denote the bijection defined by 
$$\theta(v) 
= 
\sigma^m(v) 
\prod_{i=1}^m 
\sigma^{m+1-i} (s_i)$$
for $v\in G$. 
For each $(s_1, \cdots, s_m)\in S^m$ and $v\in G$, note that 
\begin{align*}
\theta(g\cdot v) 
& =  
\sigma^m(g) \cdot \theta(v) \\
& = 
\psi_{(s_1, \cdots, s_m), v}(g) \cdot \theta(v) 
\end{align*}
for any $g\in \calG$, where $\psi_{(s_1, \cdots, s_m), v}$ denotes the automorphism 
$$g\mapsto 
\sigma^m(g)
$$
of the group $\calG$. Moreover, for any subset $A$ of $G$, 
$\scrN^m(g\cdot A) = \sigma^m(g) \cdot \scrN^m(A)$. 

Since $C(G, S)^\sigma$ is connected, its vertex Cheeger constant $h_\sigma$ is positive. Thus $C(G, S)^\sigma$ is an $h_\sigma$-expander with $h_\sigma>0$. 
If $\sigma^2$ is the trivial automorphism of $G$, then from Theorem \ref{thmPrincipal}, the nontrivial eigenvalues of the normalised adjacency operator of $C(G, S)^\sigma$ are greater than 
$$
-1 + \frac {h_\sigma^4}{2^{12} d^{8}}
.
$$
If $\sigma^{2k}$ is the trivial automorphism of $G$ for some odd integer $k\geq 1$, then from Theorem \ref{thmPrincipal}, the nontrivial eigenvalues of the normalised adjacency operator of $C(G, S)^\sigma$ are greater than 
$$
\left(
-1 + \frac {1}{2^{12} d^{8k}}
\left(
\frac 12 
\left(
1 - 
\left(
1 - \frac{h_\sigma^2}{2d^2}
\right)^k
\right)
\right)
^4
\right)^{1/k}.
$$
By the discrete Cheeger--Buser inequality (Proposition \ref{Prop:chin}), the result follows. 
\end{proof}

\subsection{The twisted Cayley sum graph}

For a subset $A$ of $G$, its neighbourhood in $C_\Sigma(G, S)^\sigma$ is denote by $\scrN_\Sigma(A)$. Given $g_1, \cdots, g_r \in G$,  $\prod_{i = 1}^r g_i$ denotes the product $g_1 \cdots g_r$. 

\begin{lemma}
\label{Lemma:UndirectedsigmaCayleySum}
The twisted Cayley sum graph $C_\Sigma (G, S)^\sigma$ is undirected if and only if $S$ contains $\sigma^2(g)\sigma(s)g^\mo$ for any $s\in S, g\in G$. 
\end{lemma}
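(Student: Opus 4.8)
The plan is to reduce the undirectedness of $C_\Sigma(G,S)^\sigma$ to the stated membership condition on $S$ by first eliminating the dummy variable $s$ from the adjacency relation, and then performing a change of variables.

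First I would observe that for fixed $x,y\in G$ the equation $y=\sigma(x^{-1}s)$ has the unique solution $s=x\sigma^{-1}(y)$, so there is an edge from $x$ to $y$ in $C_\Sigma(G,S)^\sigma$ if and only if $x\sigma^{-1}(y)\in S$ (and such an edge is simple, since $S$ is a set and distinct $s\in S$ yield distinct neighbours $\sigma(x^{-1}s)$; hence ``undirected'' here is just symmetry of the adjacency relation, with no edge-multiplicity issue). Consequently $C_\Sigma(G,S)^\sigma$ is undirected precisely when $x\sigma^{-1}(y)\in S\iff y\sigma^{-1}(x)\in S$ for all $x,y\in G$, and by the $x\leftrightarrow y$ symmetry of this statement it suffices to establish the forward implication for all $x,y$.

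Next I would carry out the substitution. Writing $s=x\sigma^{-1}(y)\in S$, one has $\sigma^{-1}(y)=x^{-1}s$, hence $y=\sigma(x)^{-1}\sigma(s)$ and $y\sigma^{-1}(x)=\sigma(x)^{-1}\sigma(s)\sigma^{-1}(x)$. Putting $x=\sigma(g)$ (legitimate because $\sigma$ is a bijection of $G$) turns this into $\sigma^2(g)^{-1}\sigma(s)g$. Thus the assertion ``$x\sigma^{-1}(y)\in S\Rightarrow y\sigma^{-1}(x)\in S$ for all $x,y$'' is equivalent to ``$\sigma^2(g)^{-1}\sigma(s)g\in S$ for all $g\in G$, $s\in S$'': for the forward direction, given $g$ and $s$ take $x=\sigma(g)$ and $y=\sigma(x^{-1}s)$, so that $x\sigma^{-1}(y)=s\in S$; conversely, given $x,y$ with $x\sigma^{-1}(y)\in S$ put $s=x\sigma^{-1}(y)$ and $g=\sigma^{-1}(x)$. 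Finally, replacing $g$ by $g^{-1}$ throughout and using that $\sigma^2$ is a homomorphism (so $\sigma^2(g^{-1})^{-1}=\sigma^2(g)$) rewrites the condition in the stated form $\sigma^2(g)\sigma(s)g^{-1}\in S$ for all $g\in G$, $s\in S$.

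The argument is essentially bookkeeping, so I do not expect a genuine obstacle; the only points needing care are tracking the interplay of $\sigma$ and $\sigma^{-1}$ and the placement of inverses when pushing $\sigma$ across products, and verifying that the reparametrizations $g\mapsto\sigma(g)$ and $g\mapsto g^{-1}$ are bijections of $G$ so that the universal quantifiers are preserved.
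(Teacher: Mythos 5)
Your proof is correct and follows essentially the same route as the paper: both unwind the adjacency relation $y=\sigma(x^{-1}s)$ directly and then reparametrize (your $x=\sigma(g)$ followed by $g\mapsto g^{-1}$) to arrive at the condition $\sigma^2(g)\sigma(s)g^{-1}\in S$ for all $g\in G$, $s\in S$. Your elimination of $s$ via $s=x\sigma^{-1}(y)$ just makes the bookkeeping slightly cleaner than the paper's device of inserting $\sigma(h^{-1})\sigma(h)$ before reparametrizing; the substance is the same.
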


\begin{proof}
If $h$ is adjacent to $g$, then $h = \sigma(g^\mo s)$ for some $s\in S$. Note that 
\begin{align*}
g 
& = s \sigma^\mo(h^\mo) \\
& = \sigma(h^\mo) \sigma(h) s\sigma^\mo(h^\mo),
\end{align*}
which implies that $g$ is adjacent to $h$ if and only if $\sigma(h) s\sigma^\mo(h^\mo) \in \sigma^\mo(S)$, i.e., 
$\sigma^2(h) \sigma(s) h^\mo \in S$. Hence $g$ is adjacent to each of its adjacent vertices if and only if $S$ contains 
$$\sigma^2(\sigma(g^\mo s)) \sigma(s) (\sigma(g^\mo s))^\mo
$$
for any $s\in S$. So $C_\Sigma (G, S)^\sigma$ is undirected if and only if $S$ contains $
\sigma^2(\sigma(g^\mo s)) \sigma(s) (\sigma(g^\mo s))^\mo
$ for any $s\in S, g\in G$. Note that for $s\in S$ and $x\in G$ and $g = (\sigma^\mo(x)s^\mo)^\mo$, 
$$
\sigma^2(\sigma(g^\mo s)) \sigma(s) (\sigma(g^\mo s))^\mo
=
\sigma^2(x) \sigma(s) x^\mo.
$$
So $C_\Sigma (G, S)^\sigma$ is undirected if and only if $S$ contains $\sigma^2(g)\sigma(s)g^\mo$ for any $s\in S, g\in G$. Hence the Lemma follows. 
\end{proof}

\begin{proof}[Proof of Theorem \ref{Thm:Bdd}(3)]
Let $\calG$ denote the group $G$ and $m$ be a positive integer. Consider the action of $\calG$ on the set of vertices of $(C_\Sigma(G, S)^\sigma)^m$ by right multiplication via the inverse, i.e., 
$$g\cdot v = vg^\mo$$
for any $g\in \calG, v\in G$. 

For an element $(s_1, \cdots, s_m)\in S^m$, let $\theta:G\to G$ denote the bijection defined by 
$$\theta(v) 
= 
\left(
\prod_{i = 1}^{\lfloor m/2\rfloor} \sigma^{2i} (s_{m+1 -2i}^\mo)
\right)
\sigma(v^{(-1)^m})
\left(
\prod_{i = 1}^{\lceil m/2\rceil} \sigma^{2\lceil m/2\rceil +1 - 2i} (s_{m+1 -(2\lceil m/2\rceil +1 - 2i)})
\right)
$$
for $v\in G$. 
For each $(s_1, \cdots, s_m)\in S^m$ and $v\in G$, set 
$$
\alpha = 
\left(
\prod_{i = 1}^{\lfloor m/2\rfloor} \sigma^{2i} (s_{m+1 -2i}^\mo)
\right),
\beta = 
\left(
\prod_{i = 1}^{\lceil m/2\rceil} \sigma^{2\lceil m/2\rceil +1 - 2i} (s_{m+1 -(2\lceil m/2\rceil +1 - 2i)})
\right),
$$
and note that 
\begin{align*}
\theta(g\cdot v) 
& =  
\alpha \sigma(gv^\mo) \beta \\
& =  
\alpha \sigma(v^\mo) \beta (\sigma(v^\mo) \beta)^\mo \sigma(gv^\mo) \beta \\
& =  
\alpha \sigma(v^\mo) \beta 
\left( 
(\sigma(v^\mo) \beta)^\mo \sigma(g^\mo) \sigma(v^\mo) \beta
\right)^\mo \\
& =  
\theta(v) 
\left( 
(\sigma(v^\mo) \beta)^\mo \sigma(g^\mo) \sigma(v^\mo) \beta
\right)^\mo \\
& =  
\left( 
(\sigma(v^\mo) \beta)^\mo \sigma(g^\mo) \sigma(v^\mo) \beta
\right)\cdot \theta(v) \\
& = 
\psi_{(s_1, \cdots, s_m), v}(g^\mo) \cdot \theta (v)
\end{align*}
for any $g\in \calG$, where $\psi_{(s_1, \cdots, s_m), v}$ denotes the automorphism 
$$g\mapsto 
(\sigma(v^\mo) \beta)^\mo \sigma(g) \sigma(v^\mo) \beta
$$
of the group $\calG$. 

For any subset $A$ of $G$, note that 
$$\scrN_\Sigma^{m} 
(A) 
= 
\sigma^2(S^\mo) \sigma^4(S^\mo) \cdots \sigma^{2\lfloor m/2\rfloor}(S^\mo) 
\sigma^{m}(A^{(-1)^m}) 
\sigma^{2\lceil m/2\rceil -1}(S) \cdots \sigma^3(S) \sigma(S).
$$
For any $g\in \calG$, one obtains 
\begin{align*}
& \sigma^{2m}(g) 
\sigma^{2m-1}(S) \cdots \sigma^3(S) \sigma(S)
g^\mo
\\
& = 
\left(
\sigma^{2m}(g) 
\sigma^{2m-1}(S)
\sigma^{2m-2}(g^\mo) 
\right)
\left(
\sigma^{2m-2}(g) 
\sigma^{2m-3}(S)
\sigma^{2m-4}(g^\mo) 
\right)
\cdots \\
& \qquad \qquad 
\left(
\sigma^6(g) 
\sigma^5(S)
\sigma^4(g^\mo) 
\right)
\left(
\sigma^4(g) 
\sigma^3(S)
\sigma^2(g^\mo) 
\right)
\left(
\sigma^2(g) 
\sigma(S)
g^\mo
\right)
\\
& = 
\sigma^{2m-2}(S) 
\cdots 
\sigma^4(S) 
\sigma^2(S)
S \\
& = 
\sigma^{2m-1}(S) 
\cdots 
\sigma^5(S) 
\sigma^3(S)
\sigma(S).
\end{align*}
So, for any subset $A$ of $G$, 
\begin{align*}
\scrN_\Sigma^m(\scrN_\Sigma^m(g\cdot A))
& = 
\sigma^2(S^\mo) \sigma^4(S^\mo) \cdots \sigma^{2m}(S^\mo) 
\sigma^{2m}(g\cdot A) 
\sigma^{2m-1}(S) \cdots \sigma^3(S) \sigma(S) \\
& = 
\sigma^2(S^\mo) \sigma^4(S^\mo) \cdots \sigma^{2m}(S^\mo) 
\sigma^{2m}(A) \sigma^{2m}(g^\mo)
\sigma^{2m-1}(S) \cdots \sigma^3(S) \sigma(S) \\
& = 
\sigma^2(S^\mo) \sigma^4(S^\mo) \cdots \sigma^{2m}(S^\mo) 
\sigma^{2m}(A) \sigma^{2m}(g^\mo)
\sigma^{2m-1}(S) \cdots \sigma^3(S) \sigma(S) gg^\mo\\
& = 
\sigma^2(S^\mo) \sigma^4(S^\mo) \cdots \sigma^{2m}(S^\mo) 
\sigma^{2m}(A) 
\sigma^{2m-1}(S) \cdots \sigma^3(S) \sigma(S) g^\mo\\
& = 
\scrN_\Sigma^m(\scrN_\Sigma^m(A)) g^\mo \\
& = 
g \cdot \scrN_\Sigma^m(\scrN_\Sigma^m(A)) 
\end{align*}
holds for any $g\in \calG$.

Since $C_\Sigma (G, S)^\sigma$ is connected, its vertex Cheeger constant $h_{\Sigma, \sigma}$ is positive. Thus $C_\Sigma (G, S)^\sigma$ is an $h_{\Sigma, \sigma}$-expander with $h_{\Sigma, \sigma} >0$. 
By Theorem \ref{thmPrincipal}, the nontrivial eigenvalues of the normalised adjacency operator of $C_\Sigma(G, S)^\sigma$ are greater than 
$$
-1 + \frac {h_{\Sigma, \sigma}^4}{2^{12} d^{8}}
.
$$
By the discrete Cheeger--Buser inequality (Proposition \ref{Prop:chin}), the result follows. 
\end{proof}

\section{Spectral expansion of the Cayley and Cayley sum graphs twisted by anti-automorphisms}
\label{Sec:TwistsAnti}

Let $G$ be a finite group, $S$ be a subset of $G$ and $\sigma$ be an group anti-automorphism of $G$.

Consider the twist $C(G, S)^\sigma$ of the Cayley graph $C(G, S)$ by the anti-automorphism $\sigma$. The graph $C(G, S)^\sigma$ has $G$ as its set of vertices, and there is an edge from $x$ to $y$ whenever $y = \sigma(xs)$ for some $s\in S$. Roughly speaking, the twisted Cayley graph $C(G, S)^\sigma$ has the same set of vertices as that of the Cayley graph $C(G, S)$, and given a vertex $x$ in $C(G, S)^\sigma$, its adjacent vertices are precisely the translates of the adjacent vertices of $x$ in $C(G, S)$ under $\sigma$. 

Consider the twist $C_\Sigma(G, S)^\sigma$ of the Cayley sum graph $C_\Sigma(G, S)$ by the anti-automorphism $\sigma$. The graph $C_\Sigma(G, S)^\sigma$ has $G$ as its set of vertices, and there is an edge from $x$ to $y$ whenever $y = \sigma(x^\mo s)$ for some $s\in S$. Roughly speaking, the twisted Cayley sum graph $C_\Sigma(G, S)^\sigma$ has the same set of vertices as that of the Cayley sum graph $C_\Sigma(G, S)$, and given a vertex $x$ in $C_\Sigma(G, S)^\sigma$, its adjacent vertices are precisely the translates of the adjacent vertices of $x$ in $C_\Sigma(G, S)$ under $\sigma$. 

\begin{theorem}\label{Thm:BddAnti}
Let $S$ be a subset of a finite group $G$ with $|S|= d$. Suppose $\sigma$ is an anti-automorphism of $G$. 
\begin{enumerate}
\item If the twisted Cayley graph $C (G, S)^\sigma$ is connected and undirected and $|G| \geq 4$, then the nontrivial spectrum of its normalised adjacency operator lies in the interval 
$$\left( -1 + \frac{h_\sigma^4}{2^{12}d^8}
,
1 - \frac{h_\sigma^2}{2d^2}
\right]$$
where $h_\sigma$ denotes the vertex Cheeger constant of $C (G, S)^\sigma$. 

\item 
Suppose $\sigma^{2}$ is the trivial automorphism of $G$. 
If the twisted Cayley sum graph $C_\Sigma(G, S)^\sigma$ is connected and undirected and $|G| \geq 4$, then the nontrivial spectrum of its normalised adjacency operator lies in the interval 
$$\left( -1 + \frac{h_{\Sigma, \sigma}^4}{2^{12}d^8} 
,
1 - \frac{h_{\Sigma, \sigma}^2}{2d^2}
\right]$$
where $h_{\Sigma, \sigma}$ denotes the vertex Cheeger constant of $C_\Sigma(G, S)^\sigma$. 

\item 
Suppose $\sigma^{2k}$ is the trivial automorphism of $G$, where $k\geq 1$ is an odd integer. 
If the twisted Cayley graph sum $C_\Sigma(G, S)^\sigma$ is connected and undirected and $|G| \geq 4$, then the nontrivial spectrum of its normalised adjacency operator lies in the interval 
$$\left( 
\left(
-1 + \frac {1}{2^{12} d^{8k}}
\left(
\frac 12 
\left(
1 - 
\left(
1 - \frac{h_{\Sigma, \sigma}^2}{2d^2}
\right)^k
\right)
\right)
^4
\right)^{1/k}
,
1 - \frac{h_{\Sigma, \sigma}^2}{2d^2}
\right]$$
where $h_{\Sigma, \sigma}$ denotes the vertex Cheeger constant of $C_\Sigma(G, S)^\sigma$. 
\end{enumerate}
\end{theorem}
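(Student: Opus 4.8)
I would prove Theorem~\ref{Thm:BddAnti} by transporting the situation, via the inversion map of $G$, to the automorphism case already treated in Theorem~\ref{Thm:Bdd}. Write $\iota\colon G\to G$ for the anti-automorphism $g\mapsto g^{-1}$. For any anti-automorphism $\sigma$ of $G$ one has $\sigma(g^{-1})=\sigma(g)^{-1}$, so $\iota$ commutes with $\sigma$ and $\iota\circ\sigma$ is an \emph{automorphism} of $G$; consequently $(\iota\circ\sigma)^n=\iota^n\circ\sigma^n$, whence $(\iota\circ\sigma)^n=\sigma^n$ for every even $n$, and in particular $(\iota\circ\sigma)^2=\sigma^2$ and $(\iota\circ\sigma)^{2k}=\sigma^{2k}$.

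The key point is that the inversion map $\phi\colon G\to G$, $v\mapsto v^{-1}$, is a graph isomorphism from $C(G,S)^\sigma$ onto $C_\Sigma(G,S)^{\iota\circ\sigma}$, and simultaneously a graph isomorphism from $C_\Sigma(G,S)^\sigma$ onto $C(G,S)^{\iota\circ\sigma}$. This is a short direct check: $x$ is adjacent to $y$ in $C(G,S)^\sigma$ iff $y=\sigma(xs)$ for some $s\in S$, i.e.\ iff $y^{-1}=\sigma(xs)^{-1}=(\iota\circ\sigma)\bigl((x^{-1})^{-1}s\bigr)$, which is exactly the relation defining an edge from $x^{-1}$ to $y^{-1}$ in $C_\Sigma(G,S)^{\iota\circ\sigma}$; likewise $x$ is adjacent to $y$ in $C_\Sigma(G,S)^\sigma$ iff $y=\sigma(x^{-1}s)$ for some $s\in S$, i.e.\ iff $y^{-1}=(\iota\circ\sigma)(x^{-1}s)$, the relation defining an edge from $x^{-1}$ to $y^{-1}$ in $C(G,S)^{\iota\circ\sigma}$. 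The one thing to keep track of here is the direction of edges, so that the undirectedness hypothesis passes through $\phi$.

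Since a graph isomorphism preserves connectedness, undirectedness, the degree, the vertex Cheeger constant, and the spectrum of the normalised adjacency operator, each assertion of Theorem~\ref{Thm:BddAnti} now reduces to the corresponding assertion of Theorem~\ref{Thm:Bdd}: part~(1) to Theorem~\ref{Thm:Bdd}(3) applied to $C_\Sigma(G,S)^{\iota\circ\sigma}$ (for which no condition on the automorphism is required); part~(2) to Theorem~\ref{Thm:Bdd}(1) applied to $C(G,S)^{\iota\circ\sigma}$, using $(\iota\circ\sigma)^2=\sigma^2=\id$; and part~(3) to Theorem~\ref{Thm:Bdd}(2) applied to $C(G,S)^{\iota\circ\sigma}$, using $(\iota\circ\sigma)^{2k}=\sigma^{2k}=\id$. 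The numerical constants and the Cheeger constants match because $\phi$ preserves $d=|S|$ and $h$.

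Alternatively, one can argue directly through Theorem~\ref{thmPrincipal}, imitating the proofs of Theorem~\ref{Thm:Bdd} with the roles of the twisted Cayley graph and the twisted Cayley sum graph exchanged; the \emph{anti-automorphism} option in Theorem~\ref{thmPrincipal} is tailored to this, since an $m$-fold composite of the bijections $\theta_i$ twists $\calG$ by a conjugate of $\sigma^m$, and an odd power of an anti-automorphism is again an anti-automorphism. In that approach the delicate point is condition~(5) of Theorem~\ref{thmPrincipal}: for the twisted Cayley graph it follows by telescoping from the undirectedness hypothesis — which already forces $\sigma(S)=S^{-1}$ — and needs no extra assumption, exactly as in the proof of Theorem~\ref{Thm:Bdd}(3); whereas for the twisted Cayley sum graph the analogous telescoping requires $\sigma^2$, respectively $\sigma^{2k}$, to be trivial, exactly as in the proof of Theorem~\ref{Thm:Bdd}(1),(2), which is the source of the hypotheses in parts~(2) and~(3).
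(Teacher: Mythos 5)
Your reduction is correct, and it is a genuinely different argument from the one in the paper. You observe that for an anti-automorphism $\sigma$ the map $\iota\colon g\mapsto g^{-1}$ commutes with $\sigma$, that $\iota\circ\sigma$ is an automorphism with $(\iota\circ\sigma)^{2}=\sigma^{2}$ and $(\iota\circ\sigma)^{2k}=\sigma^{2k}$, and that inversion on vertices carries the directed edge relation of $C(G,S)^{\sigma}$ onto that of $C_\Sigma(G,S)^{\iota\circ\sigma}$ and that of $C_\Sigma(G,S)^{\sigma}$ onto that of $C(G,S)^{\iota\circ\sigma}$ (with the same $s\in S$ witnessing each edge, so degree, multiplicities, undirectedness, connectedness, the Cheeger constant and the spectrum all transfer); each part of Theorem \ref{Thm:BddAnti} then becomes the corresponding part of Theorem \ref{Thm:Bdd}, with the hypotheses matching exactly, and there is no circularity since Theorem \ref{Thm:Bdd} is proved via Theorem \ref{thmPrincipal} independently of the anti-automorphism case. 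The paper instead proves Theorem \ref{Thm:BddAnti} directly: it rechecks the hypotheses of Theorem \ref{thmPrincipal} for suitable actions of $G$ (left multiplication for part (1), right multiplication via the inverse for parts (2) and (3)), using the anti-automorphism option in condition (3) and a separate undirectedness lemma to verify condition (5). Your route is shorter and conceptually illuminating — it explains the apparent ``swap'' of hypotheses between the two theorems (why the Cayley graph needs no order condition in one setting while the sum graph does in the other) as a consequence of inversion exchanging the two families of twisted graphs — whereas the paper's direct verification does not presuppose Theorem \ref{Thm:Bdd} and exercises the general machinery in a form that also adapts to other situations such as the Schreier graphs of Section \ref{Sec:Schreier}. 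Your closing sketch of the direct alternative is consistent with the paper, but it is only a sketch; the reduction in your first two paragraphs is the complete proof and it stands.
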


\subsection{Cayley graph twisted by anti-automorphisms}

For a subset $A$ of $G$, its neighbourhood in $C(G, S)^\sigma$ is denote by $\scrN(A)$. Given $g_1, \cdots, g_r \in G$,  
$\prod_{i = 1}^r g_i$
denotes the product 
$g_1 \cdots g_r$. 

\begin{lemma}
\label{Lemma:UndirectedsigmaCayleySum}
The twisted Cayley graph $C (G, S)^\sigma$ is undirected if and only if $S$ contains $\sigma^2(g) \sigma(s^\mo) g^\mo$ for any $s\in S, g\in G$. 
\end{lemma}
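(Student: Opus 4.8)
The plan is to trace through the definition of adjacency in $C(G,S)^\sigma$ exactly as in the analogous lemma for the twisted Cayley sum graph proved earlier in this section, being careful that $\sigma$ now reverses products. Recall that $h$ is adjacent to $g$ in $C(G,S)^\sigma$ precisely when $h = \sigma(gs)$ for some $s\in S$; since $\sigma$ is an anti-automorphism this reads $h = \sigma(s)\sigma(g)$, and in particular $h^\mo = \sigma((gs)^\mo) = \sigma(g^\mo)\sigma(s^\mo)$, using the identity $\sigma(y)^\mo = \sigma(y^\mo)$ valid for any anti-automorphism.

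Next I would determine, for such an $h$, when $g$ is adjacent to $h$, i.e. when $g = \sigma(ht)$ for some $t\in S$. Applying $\sigma^\mo$ (again an anti-automorphism) gives $\sigma^\mo(g) = ht$, so the unique candidate is
$$t = h^\mo\sigma^\mo(g) = \sigma(g^\mo)\sigma(s^\mo)\sigma^\mo(g).$$
Hence $g$ is adjacent to each of its neighbours exactly when $S$ contains $\sigma(g^\mo)\sigma(s^\mo)\sigma^\mo(g)$ for every $s\in S$, and therefore $C(G,S)^\sigma$ is undirected if and only if $S$ contains $\sigma(g^\mo)\sigma(s^\mo)\sigma^\mo(g)$ for all $g\in G$ and $s\in S$.

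Finally I would rewrite this condition in the asserted form. Setting $x = \sigma^\mo(g)$, so that $g = \sigma(x)$, one gets $\sigma(g^\mo) = \sigma(\sigma(x^\mo)) = \sigma^2(x^\mo)$ and $\sigma^\mo(g) = x$, whence the element above equals $\sigma^2(x^\mo)\sigma(s^\mo)x$. As $g$ runs over $G$ so does $x$, hence so does $x^\mo$; replacing $x$ by $x^\mo$ shows that, for each fixed $s\in S$, the set $\{\sigma(g^\mo)\sigma(s^\mo)\sigma^\mo(g):g\in G\}$ coincides with $\{\sigma^2(g)\sigma(s^\mo)g^\mo:g\in G\}$. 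Consequently $C(G,S)^\sigma$ is undirected if and only if $S$ contains $\sigma^2(g)\sigma(s^\mo)g^\mo$ for all $s\in S$ and $g\in G$, which is the claim.

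There is no genuine obstacle here: the argument is a short chain of elementary manipulations, and the only points requiring care are the order reversal under $\sigma$ and $\sigma^\mo$ and the repeated use of $\sigma(y)^\mo = \sigma(y^\mo)$. I expect the written proof to occupy only a few lines, closely parallel to the proof of the corresponding lemma in the automorphism case.
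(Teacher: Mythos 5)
Your proposal is correct and follows essentially the same route as the paper: both compute, for an edge $g\to h=\sigma(gs)$, the unique element $t=h^{-1}\sigma^{-1}(g)=\sigma(g^{-1})\sigma(s^{-1})\sigma^{-1}(g)$ whose membership in $S$ characterises the reverse edge, and then perform the change of variables $g\mapsto\sigma(x^{-1})$ to reach the stated form $\sigma^{2}(g)\sigma(s^{-1})g^{-1}$. The only cosmetic difference is that you solve for $t$ directly by applying $\sigma^{-1}$, while the paper rewrites $g$ as $\sigma\bigl(h(h^{-1}\sigma^{-1}(s^{-1})\sigma^{-2}(h))\bigr)$; the two computations coincide.
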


\begin{proof}
If $h$ is adjacent to $g$, then $h = \sigma(g s)$ for some $s\in S$. Note that 
\begin{align*}
g 
& = \sigma^\mo(h)s^\mo \\
& = \sigma^\mo(h)s^\mo \sigma(h^\mo) \sigma(h) \\
& = \sigma(h^\mo \sigma^\mo(s^\mo) \sigma^{-2}(h))\sigma(h) \\
& = \sigma(h (h^\mo \sigma^\mo(s^\mo) \sigma^{-2}(h))),
\end{align*}
which implies that $g$ is adjacent to $h$ if and only if $h^\mo \sigma^\mo(s^\mo) \sigma^{-2}(h) \in S$. Hence $g$ is adjacent to each of its adjacent vertices if and only if $S$ contains 
$$
(\sigma(s) \sigma(g))^\mo \sigma^\mo(s^\mo) \sigma^{-2}(\sigma(s) \sigma(g))
$$
for any $s\in S$. So $C (G, S)^\sigma$ is undirected if and only if $S$ contains 
$$(\sigma(s) \sigma(g))^\mo \sigma^\mo(s^\mo) \sigma^{-2}(\sigma(s) \sigma(g))
= 
\sigma(g)^\mo \sigma(s^\mo) \sigma^\mo(s^\mo) \sigma^\mo(s) \sigma^\mo(g) 
= 
\sigma(g)^\mo \sigma(s^\mo) \sigma^\mo(g) 
$$
for any $s\in S, g\in G$. 
So $C (G, S)^\sigma$ is undirected if and only if $S$ contains $\sigma^2(g)\sigma(s^\mo)g^\mo$ for any $s\in S, g\in G$. Hence the Lemma follows. 
\end{proof}

\begin{proof}[Proof of Theorem \ref{Thm:BddAnti}(1)]
Let $\calG$ denote the group $G$. Consider the action of $\calG$ on the set of vertices of $C(G, S)^\sigma$ by left multiplication, i.e., 
$$g\cdot v = gv$$
for any $g\in \calG, v\in G$. 

For an element $s\in S$, let $\theta:G\to G$ denote the bijection defined by 
$$\theta(v) 
= 
\sigma(vs)$$
for $v\in G$. 
For each $s\in S$ and $v\in G$, note that 
\begin{align*}
\theta(g\cdot v) 
& =  
\sigma(gvs) \\
& =  
\sigma(gvs) (\sigma(vs))^\mo \sigma(vs) \\
& =  
(\sigma(vs) \sigma(g) (\sigma(vs))^\mo) \sigma(vs) \\
& = 
\psi_{s, v}(g) \cdot \theta(v) 
\end{align*}
for any $g\in \calG$, where $\psi_{s, v}$ denotes the anti-automorphism 
$$g\mapsto 
\sigma(vs) \sigma(g) (\sigma(vs))^\mo
$$
of the group $\calG$. 

For any subset $A$ of $G$, note that 
\begin{align*}
\scrN^{2}(A) 
& = \sigma(S) \sigma^2(A) \sigma^2(S),
\end{align*}
which yields 
\begin{align*}
\scrN^{2}(g\cdot A) 
& = 
\sigma(S) \sigma^2(g \cdot A) \sigma^2(S)\\
& = 
\sigma(S) \sigma^2(g)\sigma^2(A) \sigma^2(S)\\
& = 
g g^\mo \sigma(S) \sigma^2(g)\sigma^2(A) \sigma^2(S)\\
& = 
g (\sigma^2(g^\mo)\sigma(S^\mo)g)^\mo \sigma^2(A) \sigma^2(S)\\
& = 
g S^\mo \sigma^2(A) \sigma^2(S)\\
& = 
g \sigma(S) \sigma^2(A) \sigma^2(S)\\
& = 
g \cdot \scrN^2(A)  .
\end{align*}

Since $C(G, S)^\sigma$ is connected, its vertex Cheeger constant $h_\sigma$ is positive. Thus $C(G, S)^\sigma$ is an $h_\sigma$-expander with $h_\sigma>0$. 
By Theorem \ref{thmPrincipal}, the nontrivial eigenvalues of the normalised adjacency operator of $C(G, S)^\sigma$ are greater than 
$$
-1 + \frac {h_\sigma^4}{2^{12} d^{8}}
.
$$
By the discrete Cheeger--Buser inequality (Proposition \ref{Prop:chin}), the result follows. 
\end{proof}

\subsection{Cayley sum graph twisted by anti-automorphisms}

For a subset $A$ of $G$, its neighbourhood in $C_\Sigma(G, S)^\sigma$ is denote by $\scrN_\Sigma(A)$. Given $g_1, \cdots, g_r \in G$,  
$\prod_{i = 1}^r g_i$
denotes the product 
$g_1 \cdots g_r$. 

\begin{proof}[Proof of Theorem \ref{Thm:BddAnti}(2), (3)]
Let $\calG$ denote the group $G$. Consider the action of $\calG$ on the set of vertices of $C_\Sigma(G, S)^\sigma$ by right multiplication via the inverse, i.e., 
$$g\cdot v = vg^\mo$$
for any $g\in \calG, v\in G$. 

For an element $(s_1, \cdots, s_m)\in S^m$, let $\theta:G\to G$ denote the bijection defined by 
$$\theta(v) 
= 
\prod_{i=1}^m 
\sigma^i (s^{(-1)^{i-1}}_{m+1-i})
\sigma^m(v^{(-1)^m}) $$
for $v\in G$. 
For each $(s_1, \cdots, s_m)\in S^m$ and $v\in G$, note that 
\begin{align*}
\theta(g\cdot v) 
& =  
\prod_{i=1}^m 
\sigma^i (s^{(-1)^{i-1}}_{m+1-i})
\sigma^m((g\cdot v) ^{(-1)^m})\\
& =  
\prod_{i=1}^m 
\sigma^i (s^{(-1)^{i-1}}_{m+1-i})
\sigma^m(v^{(-1)^m})
(\sigma^m(v^{(-1)^m}))^\mo
\sigma^m((g\cdot v) ^{(-1)^m})\\
& =  
\theta(v) 
(\sigma^m(v^{(-1)^m}))^\mo
\sigma^m((g\cdot v) ^{(-1)^m})\\
& =  
\theta(v) 
\sigma^m(g^{(-1)^{m-1}}) \\
& =  
\sigma^m(g^{(-1)^m})
\cdot \theta(v) \\
& = 
\psi_{(s_1, \cdots, s_m)}(g) \cdot \theta(v) 
\end{align*}
for any $g\in \calG$, where $\psi_{(s_1, \cdots, s_m), v}$ denotes the automorphism 
$$g\mapsto 
\sigma^m(g^{(-1)^m})
$$
of the group $\calG$. 

For any integer $m\geq 1$ and any subset $A$ of $G$, note that 
\begin{align*}
\scrN_{\Sigma}^{2m}(A) 
& = \sigma(S) \sigma^2(S^\mo) \sigma^3(S) \sigma^4(S^\mo) \cdots \sigma^{2m-1} (S) \sigma^{2m} (S^\mo)\sigma^{2m}(A) ,
\end{align*}
which yields 
\begin{align*}
\scrN_{\Sigma}^{2m}(g\cdot A) 
& = \sigma(S) \sigma^2(S^\mo) \sigma^3(S) \sigma^4(S^\mo) \cdots \sigma^{2m-1} (S) \sigma^{2m} (S^\mo)\sigma^{2m}(g\cdot A) \\
& = \sigma(S) \sigma^2(S^\mo) \sigma^3(S) \sigma^4(S^\mo) \cdots \sigma^{2m-1} (S) \sigma^{2m} (S^\mo)\sigma^{2m}(Ag^\mo) \\
& = \sigma^{2m}(g) \cdot (\sigma(S) \sigma^2(S^\mo) \sigma^3(S) \sigma^4(S^\mo) \cdots \sigma^{2m-1} (S) \sigma^{2m} (S^\mo)\sigma^{2m}(A)) \\
& = \sigma^{2m}(g) \cdot \scrN_{\Sigma}^{2m}(A) .
\end{align*}

Since $C_\Sigma(G, S)^\sigma$ is connected, its vertex Cheeger constant $h_{\Sigma, \sigma}$ is positive. Thus $C_\Sigma(G, S)^\sigma$ is an $h_{\Sigma, \sigma}$-expander with $h_{\Sigma, \sigma}>0$. 
If $\sigma^2$ is the trivial automorphism of $G$, then from Theorem \ref{thmPrincipal}, the nontrivial eigenvalues of the normalised adjacency operator of $C_\Sigma(G, S)^\sigma$ are greater than 
$$
-1 + \frac {h_{\Sigma, \sigma}^4}{2^{12} d^{8}}
.
$$
If $\sigma^{2k}$ is the trivial automorphism of $G$ for some odd integer $k\geq 1$, then from Theorem \ref{thmPrincipal}, the nontrivial eigenvalues of the normalised adjacency operator of $C_\Sigma(G, S)^\sigma$ are greater than 
$$
\left(
-1 + \frac {1}{2^{12} d^{8k}}
\left(
\frac 12 
\left(
1 - 
\left(
1 - \frac{h_{\Sigma, \sigma}^2}{2d^2}
\right)^k
\right)
\right)
^4
\right)^{1/k}.
$$
By the discrete Cheeger--Buser inequality (Proposition \ref{Prop:chin}), the result follows. 
\end{proof}

\section{Spectral expansion of Schreier graphs}
\label{Sec:Schreier}

Given a subgroup $H$ of a finite group $G$, and a symmetric subset $S$ of $G$ with $|S| = d$, the Schreier graph $\mathrm{Sch} (G, H, S)$ has the set $H\backslash G$ of right cosets of $H$ in $G$ as its set of vertices and there is an edge from $Hg$ to $Hg'$ for $g, g'\in G$, if $Hg' = Hgs$ for some $s\in S$. 

\begin{theorem}
\label{Thm:Schreier}
Suppose no index two subgroup of $G$ acts transitively on the vertex set of $\mathrm{Sch} (G, H, S)$. Assume that the index of $H$ in $G$ is at least $4$, and $S\cdot S$ contains its conjugates by the elements of $G$. If the Schreier graph $\mathrm{Sch} (G, H, S)$ is connected, then the nontrivial spectrum of its normalised adjacency operator lies in the interval 
$$\left( -1 + \frac{h_{\mathrm{Sch}}^4}{2^{12}d^8}
,
1 - \frac{h_{\mathrm{Sch}}^2}{2d^2}
\right]$$
where $h_{\mathrm{Sch}}$ denotes the vertex Cheeger constant of $\mathrm{Sch} (G, H, S)$. 
\end{theorem}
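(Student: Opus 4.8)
The plan is to exhibit $\mathrm{Sch}(G, H, S)$ as an instance of the setting of Theorem~\ref{thmPrincipal} with $k=1$, i.e. of Theorem~\ref{thmPrincipalk1}, and then to read off the two endpoints of the interval exactly as in the proofs of Theorems~\ref{Thm:Bdd} and~\ref{Thm:BddAnti}. Put $V=H\backslash G$ with the graph structure of $\mathrm{Sch}(G,H,S)$; as $S$ is symmetric this graph is undirected, and $|V|=[G:H]\geq 4$. Let $\calG=G$ act on the left of $V$ by $g\cdot Hx=Hxg^{\mo}$, and for $s\in S$ let $\theta_s\colon V\to V$ be the bijection $Hx\mapsto Hxs$; these $\theta_s$ realise the edges of $\mathrm{Sch}(G,H,S)$, so $\calN(Hx)=\{Hxs:s\in S\}$. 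Since the graph is connected its vertex Cheeger constant $h_{\mathrm{Sch}}$ is positive, so it is an $h_{\mathrm{Sch}}$-vertex expander.

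I would then verify the four numbered hypotheses of Theorem~\ref{thmPrincipalk1}. The action of $\calG$ on $V$ is transitive, and the solutions $g$ of $Hxg^{\mo}=Hy$ form the left coset $y^{\mo}Hx$, so the action is transitive of order $t=|H|$ and $|\calG|=|G|=t|V|$; this is condition~(2). Condition~(1) is the standing hypothesis that no index two subgroup of $G$ acts transitively on $H\backslash G$. For condition~(3) one computes $\theta_s(g\cdot Hx)=Hxg^{\mo}s=(s^{\mo}gs)\cdot(Hxs)=(s^{\mo}gs)\cdot\theta_s(Hx)$, so one takes $\psi_{s,Hx}$ to be the inner automorphism $g\mapsto s^{\mo}gs$ of $G$ (independent of $Hx$); note that $k=1$ already suffices here, since condition~(3) holds with an honest automorphism, without the parity issues that force larger $k$ in part of Theorem~\ref{Thm:Bdd}. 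For condition~(4), writing a subset $A$ of $V$ as the union $\widetilde A$ of the right cosets it contains, one has $\calN(\calN(A))=\widetilde A\,(S\cdot S)$, and for $\tau\in\calG$ one has $\tau(A)=\widetilde A\tau^{\mo}$ and $\tau(\calN(\calN(A)))=\widetilde A\,(S\cdot S)\,\tau^{\mo}$, so the inclusion $\calN(\calN(\tau(A)))\subseteq\tau(\calN(\calN(A)))$ becomes $\widetilde A\,\tau^{\mo}(S\cdot S)\subseteq\widetilde A\,(S\cdot S)\,\tau^{\mo}$, which follows from $\tau^{\mo}(S\cdot S)\tau\subseteq S\cdot S$, i.e. from the hypothesis that $S\cdot S$ contains all of its conjugates.

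Granting that Theorem~\ref{thmPrincipalk1} applies, it gives that the nontrivial eigenvalues of the normalised adjacency operator of $\mathrm{Sch}(G,H,S)$ are $>-1+h_{\mathrm{Sch}}^{4}/(2^{12}d^{8})$. For the other endpoint, Lemma~\ref{Lemma:VertexEdgeCons} gives $\frakh(\mathrm{Sch}(G,H,S))\geq h_{\mathrm{Sch}}/d$, and then the lower bound in Proposition~\ref{Prop:chin} gives $\lambda_2\geq\frakh(\mathrm{Sch}(G,H,S))^{2}/2\geq h_{\mathrm{Sch}}^{2}/(2d^{2})$ for the second smallest Laplacian eigenvalue $\lambda_2$, so every nontrivial eigenvalue of the normalised adjacency operator is at most $1-\lambda_2\leq 1-h_{\mathrm{Sch}}^{2}/(2d^{2})$. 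Combining the two bounds yields the asserted interval.

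The step I expect to require the most care is ensuring that Theorem~\ref{thmPrincipalk1} really applies, namely that $\mathrm{Sch}(G,H,S)$ is non-bipartite --- for otherwise $-1$ is itself an eigenvalue and the contradiction reached at the end of the proof of Theorem~\ref{thmPrincipalk1} is unavailable. The conjugation-invariance of $S\cdot S$ makes $\langle S\cdot S\rangle$ normal in $G$, so $H\langle S\cdot S\rangle$ is a subgroup, and connectedness forces $[G:H\langle S\cdot S\rangle]\leq 2$; bipartiteness of $\mathrm{Sch}(G,H,S)$ is equivalent to this index being two. Ruling out (or otherwise dealing with) the bipartite case via the hypothesis that no index two subgroup of $G$ acts transitively on $H\backslash G$ is the delicate point; the remainder of the argument is the routine bookkeeping indicated above.
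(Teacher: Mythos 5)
Your proposal follows essentially the same route as the paper's own proof: the same left action $g\cdot Hx=Hxg^{\mo}$ (transitive of order $t=|H|$), the same bijections $\theta_s(Hx)=Hxs$ with the inner automorphisms $\psi_s(g)=s^{\mo}gs$, the same use of the conjugation-invariance of $S\cdot S$ to obtain $\calN(\calN(\tau(A)))\subseteq\tau(\calN(\calN(A)))$, and then Theorem \ref{thmPrincipalk1} for the left endpoint together with Lemma \ref{Lemma:VertexEdgeCons} and Proposition \ref{Prop:chin} for the right endpoint. Up to the last paragraph of your write-up, this is exactly the paper's argument.

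The point you flag at the end is, however, a genuine issue, and you should know that the paper does not resolve it either: its proof simply invokes Theorem \ref{thmPrincipalk1}, whose hypotheses include non-bipartiteness, without verifying that hypothesis. Moreover, you cannot close this gap from the stated hypotheses of Theorem \ref{Thm:Schreier} alone. Take $G=\bbZ/4\bbZ$, $H=\{0\}$, $S=\{1,3\}$: the only index two subgroup $\{0,2\}$ has two orbits on $H\backslash G$, the index of $H$ is $4$, $S\cdot S=\{0,2\}$ is conjugation-invariant since $G$ is abelian, and the graph is connected; yet $\mathrm{Sch}(G,H,S)$ is the $4$-cycle, which is bipartite, so $-1$ lies in its nontrivial spectrum and the asserted interval fails. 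As you correctly observe, bipartiteness is equivalent to $H\langle S\cdot S\rangle$ having index two in $G$, and the hypothesis that no index two subgroup acts transitively does not exclude this: an index two subgroup containing $H$ is automatically intransitive on $H\backslash G$, so it is not ruled out. The fix is to add a non-bipartiteness assumption (equivalently, $H\langle S\cdot S\rangle=G$) to the statement, exactly as in Theorem \ref{thmPrincipalk1}; with that extra hypothesis your argument, like the paper's, is complete, and your identification of the delicate point is more careful than the paper's own treatment.
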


\begin{proof}
Let $\calG$ denote the group $G$. Consider the action of $\calG$ on the set of vertices $H\backslash G$ of $\mathrm{Sch} (G, H, S)$ by right multiplication via the inverse, i.e., 
$$\tau \cdot Hg
= Hg \tau^\mo
$$
for any $\tau \in \calG$ and any right coset $Hg$ of $H$ in $G$. 
Note that this action is transitive of order $|H|$. 

For an element $s\in S$, let $\theta: 
H\backslash G 
\to 
H\backslash G$
denote the bijection defined by 
$$\theta(Hg) 
= Hgs.$$
For any $Hg\in H\backslash G$, note that 
\begin{align*}
\theta(\tau \cdot Hg)
& = Hg\tau^\mo s\\
& = Hgs (s^\mo \tau s)^\mo \\
& = (s^\mo \tau s) \cdot Hgs\\
& = \psi_s(\tau) \cdot Hgs\\
\end{align*}
for any $\tau \in \calG$, where $\psi_s$ denote the automorphism 
$$\tau \mapsto s^\mo \tau s$$
of the group $\calG$. 

For any element $Hg\in H\backslash G$, note that 
\begin{align*}
\scrN_S(\scrN_S(\tau\cdot Hg))
& = \{Hg\tau^\mo x\,|\, x\in S\cdot S\} \\
& = \{Hg\tau^\mo x\,|\, x\in \tau S\cdot S \tau^\mo \} \\
& = \{Hgx\tau^\mo \,|\, x\in S\cdot S\} \\
& = \{\tau \cdot Hgx \,|\, x\in S\cdot S\} \\
& = \tau \cdot \scrN_S(\scrN_S(Hg))
\end{align*}
holds for any $\tau \in \calG$. 

If the Schreier graph $\mathrm{Sch} (G, H, S)$ is an $\varepsilon$-vertex expander for some $\varepsilon>0$, then by Theorem \ref{thmPrincipalk1}, the nontrivial eigenvalues of its normalised adjacency operator are greater than 
$$ - 1 + \frac{\varepsilon^4} {2^{12} d^8}.$$
By the discrete Cheeger--Buser inequality (Proposition \ref{Prop:chin}), the result follows. 
\end{proof}

\def\cprime{$'$} \def\Dbar{\leavevmode\lower.6ex\hbox to 0pt{\hskip-.23ex
  \accent"16\hss}D} \def\cfac#1{\ifmmode\setbox7\hbox{$\accent"5E#1$}\else
  \setbox7\hbox{\accent"5E#1}\penalty 10000\relax\fi\raise 1\ht7
  \hbox{\lower1.15ex\hbox to 1\wd7{\hss\accent"13\hss}}\penalty 10000
  \hskip-1\wd7\penalty 10000\box7}
  \def\cftil#1{\ifmmode\setbox7\hbox{$\accent"5E#1$}\else
  \setbox7\hbox{\accent"5E#1}\penalty 10000\relax\fi\raise 1\ht7
  \hbox{\lower1.15ex\hbox to 1\wd7{\hss\accent"7E\hss}}\penalty 10000
  \hskip-1\wd7\penalty 10000\box7}
  \def\polhk#1{\setbox0=\hbox{#1}{\ooalign{\hidewidth
  \lower1.5ex\hbox{`}\hidewidth\crcr\unhbox0}}}
\providecommand{\bysame}{\leavevmode\hbox to3em{\hrulefill}\thinspace}
\providecommand{\MR}{\relax\ifhmode\unskip\space\fi MR }
\providecommand{\MRhref}[2]{%
  \href{http://www.ams.org/mathscinet-getitem?mr=#1}{#2}
}
\providecommand{\href}[2]{#2}



\end{document}